\newtheorem{theorem}{Theorem}
\newtheorem*{unntheorem}{Theorem}
\newtheorem{lemma}{Lemma}
\newtheorem{corollary}{Corollary}
\newtheorem{proposition}{Proposition}
\newtheorem{question}{Question}
\newcommand{\R}{\mathbb{R}}
\newcommand{\C}{\mathbb{C}}
\newcommand{\N}{\mathbb{N}}
\newcommand{\Z}{\mathbb{Z}}
\newcommand{\T}{\mathbb{T}}
\newcommand{\Exp}{\textnormal{Exp}}
\newcommand{\Bo}{{\cal B}}
\newcommand{\A}{{\cal A}}
\newcommand{\Imm}{\textnormal{Im}}
\newcommand{\Reee}{\textnormal{Re}}
\newcommand{\kernel}{\textnormal{ker}}
\newcommand{\ind}{\textnormal{Ind}}
\newcommand{\conv}{\textnormal{conv}}
\newcommand{\ldens}{\underline{\textnormal{dens}}}
\newcommand{\linspan}{\textnormal{span}}
\newcommand{\I}{I}
\title{\bf \center{On the intersection of the spectrum of frequently hypercyclic operators with the unit circle} }
\author{Hans-Peter Beise}
\begin{document}
\maketitle

%

\begin{abstract}
We exclude the existence of frequently hypercyclic operators that have a spectrum contained in the closed unit disc and that intersects the unit circle in only finitely many points under certain additional conditions. This extends a result of S. Shkarin, which states that the spectrum of a frequently hypercyclic operator cannot have isolated points.  
\end{abstract}

{\bf Keywords:} Linear dynamics; frequently hypercyclic operators; spectrum of operators; entire functions of exponential type \\

{\bf 2010 Mathematics Subject Classification:} 47A16, 47A10, 37B99, 30K99\\%
%
%


\section{Introduction and main results}
A bounded operator $T$ on a  Banach space $X$ is called \textbf{hypercyclic} if there exists a vector $x\in X$ such that the orbit $\{T^nx:n\in\N\}$ is dense in $X$. Such a vector $x$ is said to be a \textbf{hypercyclic vector}. The operator is called \textbf{frequently hypercyclic} if there exists some $x\in X$ such that for every non-empty open set $U\subset X$ the sequence $\{n:T^nx\in U\}$ has positive lower density. The vector $x$ is called a \textbf{frequently hypercylic vector} in this case. We recall that the \textbf{lower density} of a discrete set $\Lambda\subset \C$ is defined by
\[
 \liminf\limits_{r\rightarrow \infty}\frac{\#\{\lambda\in\Lambda:|\lambda|\leq r\}}{r}=:\ldens(\Lambda).
\]
In the early stages of the research on these operators, it was shown that a certain size of the unimodular point spectrum is sufficient for frequent hypercyclicity cf. \cite{frequentBayartGrivaux} and \cite{bayMathBook}. More precisely, in \cite{frequentBayartGrivaux} it is shown that a bounded operator on a Hilbert space is frequently hypercyclic if it has perfectly spanning set of eigenvectors associated to unimodular eigenvalues. A bounded operator $T$ on a Banach space $X$ is said to have \textbf{perfectly spanning set of eigenvectors associated to unimodular eigenvalues} if there exists a continuous probability measure $\mu$ on $\T:=\{z:|z|=1\}$ so that for every measurable set $A\subset \T$ with $\mu(A)=1$, the linear hull of  $\{\kernel(T-\lambda \I):\lambda\in A\}$ is dense in $X$, where $\I$ is the identity operator. This result was recently extended by S. Grivaux \cite{grivauxFrequentClass} to Banach spaces, i.e. her result states that every bounded operator on a Banach space that has perfectly spanning set of eigenvectors associated to unimodular eigenvalues is frequently hypercyclic. Conversely, it is shown by S. Shkarin in \cite{shkarinSpectrum}, that the spectrum of a frequently hypercyclic Banach space operator cannot have isolated points. 

For a bounded Banach space operator $T$, we denote by $\sigma(T)$ the spectrum of $T$. 
We extend the result of S. Shkarin to further necessary conditions on the spectrum of frequently hypercyclic operators.  \\
We apply the properties of numbers $\alpha_1,...,\alpha_m\in\R$ that are linearly independent over the field of rationals. That is,  $c_1\alpha_1+c_2\alpha_2+...+c_m\alpha_m=0$ with $c_1,...,c_m \in \Z$, implies that $c_1 =c_2=...=c_m=0$. 

\begin{unntheorem}[Kronecker's theorem]
Let $\alpha_1,...,\alpha_m\in\R$ be linearly independent over the field of rationals. Then for every choice of $\beta_1,...,\beta_m\in\R$ and $\varepsilon>0$ there exists a real $t$ such that $\sup_{j=1,...,m}|e^{ i\alpha_j t}-e^{i\beta_j}|<\varepsilon$.
\end{unntheorem}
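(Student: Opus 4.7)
The plan is to prove the theorem via Weyl's equidistribution criterion. Reformulated, the claim is that the image of the continuous homomorphism
$$\gamma:\R\to\T^m,\qquad \gamma(t):=(e^{i\alpha_1 t},\ldots,e^{i\alpha_m t}),$$
is dense in the product torus $\T^m$; since $(e^{i\beta_1},\ldots,e^{i\beta_m})$ is an arbitrary target in $\T^m$, density yields the desired $t$.

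First I would form the averaged measures $\mu_T:=T^{-1}\int_0^T\delta_{\gamma(t)}\,dt$ on $\T^m$ and test them against an arbitrary character $\chi_k(z_1,\ldots,z_m)=z_1^{k_1}\cdots z_m^{k_m}$, where $k=(k_1,\ldots,k_m)\in\Z^m$. Substituting the expression for $\gamma$ reduces the test integral to
$$\int_{\T^m}\chi_k\,d\mu_T=\frac{1}{T}\int_0^T e^{i(k_1\alpha_1+\cdots+k_m\alpha_m)t}\,dt.$$
The decisive observation is that for $k\neq 0$ the $\Q$-linear independence of the $\alpha_j$ forces $c:=k_1\alpha_1+\cdots+k_m\alpha_m$ to be nonzero, so the right-hand side equals $(e^{icT}-1)/(icT)$ and tends to $0$ as $T\to\infty$; for $k=0$ it is identically $1$.

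Since finite linear combinations of characters are dense in $C(\T^m)$ by Stone--Weierstrass, this shows that $\mu_T$ converges weakly to the normalized Haar measure $\lambda$ on $\T^m$. Given any open neighbourhood $B$ of the target point, one has $\lambda(B)>0$ and hence $\mu_T(B)>0$ for all sufficiently large $T$; this forces $\gamma(t)\in\overline{B}$ for some $t\in[0,T]$, and shrinking $B$ yields the claimed $\varepsilon$-approximation.

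The only nontrivial step is the nonvanishing $c\neq 0$, which is precisely where the rational independence hypothesis enters; everything else is a routine pass through Weyl's criterion. A more conceptual alternative would invoke Pontryagin duality to identify the closed subgroup $\overline{\gamma(\R)}\subset\T^m$ via its annihilator in $\Z^m$, which the independence hypothesis again forces to be trivial, so that $\overline{\gamma(\R)}=\T^m$ immediately.
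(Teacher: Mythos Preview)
Your argument via Weyl's equidistribution criterion is correct and entirely standard: the rational independence of the $\alpha_j$ is exactly what forces $k_1\alpha_1+\cdots+k_m\alpha_m\neq 0$ for every nonzero $k\in\Z^m$, the exponential averages then vanish in the limit, and Stone--Weierstrass plus Portmanteau give weak convergence to Haar measure and hence density of the orbit. (One very minor point: once $\mu_T(B)>0$ you in fact get $\gamma(t)\in B$, not merely $\gamma(t)\in\overline{B}$, since $\mu_T$ is the pushforward of Lebesgue measure; but this is harmless.)

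As for comparison with the paper: there is nothing to compare. The paper does not prove Kronecker's theorem at all; it is quoted as a classical result and then used as a black box, most notably in the proof of Lemma~\ref{kroneckerLemma}, where a compactness argument upgrades the bare existence of $t$ to a uniform bound $t_0$ on the length of the interval in which such a $t$ can be found. So your write-up supplies something the paper deliberately omits. The Pontryagin-duality alternative you mention at the end is also perfectly valid and arguably cleaner, since it identifies $\overline{\gamma(\R)}$ directly without passing through equidistribution.
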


Kronecker's theorem is an essential tool in the proof of our main result which states as follows.

\begin{theorem}\label{thendl}
Let $T$ be a bounded linear operator on a complex Banach space $X$ and $K$ a component of $\sigma(T)$ which is closed and open in $\sigma(T)$ and so that $K\subset \{z:|z|\leq 1\}$ and $A=K\cap \T$ is a finite set $A=\{e^{i\alpha_1},...,e^{i \alpha_m}\}$ such that $\alpha_1,...,\alpha_m$ are linearly independent over the field of rationals. Then $T$ is not frequently hypercyclic.
\end{theorem}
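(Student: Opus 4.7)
Since $K$ is clopen in the compact set $\sigma(T)$, Riesz functional calculus furnishes a bounded projection
\[
P = \zwpi\oint_\gamma (wI-T)^{-1}\,dw
\]
commuting with $T$, where $\gamma$ is a cycle in the resolvent set separating $K$ from $\sigma(T)\setminus K$. This decomposes $X = PX\oplus(I-P)X$ and $T = T_1\oplus T_2$ with $\sigma(T_1) = K$ and $\sigma(T_2) = \sigma(T)\setminus K$. For every open $U_1\subset PX$ the set
\[
\{n : T_1^n(Px)\in U_1\} = \{n : T^n x\in U_1\oplus(I-P)X\}
\]
inherits positive lower density from the frequent hypercyclicity of $x$, so $Px$ is frequently hypercyclic for $T_1$. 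I may therefore assume throughout that $\sigma(T) = K$.

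\textbf{Entire function and indicator.} For any $\varphi\in X^*$ consider
\[
E_\varphi(\zeta) := \varphi\bigl(\exp(\zeta T)x\bigr) = \sum_{n\ge 0}\frac{\varphi(T^n x)}{n!}\,\zeta^n.
\]
By Gelfand's formula $\|T^n\|^{1/n}\to 1$, hence $E_\varphi$ is entire with exponential type $\limsup_n|\varphi(T^n x)|^{1/n}\le 1$. Its Borel transform equals the resolvent pairing $\widetilde E_\varphi(w) = \varphi((wI-T)^{-1}x)$, analytic on $\C\setminus K$, so the conjugate indicator diagram of $E_\varphi$ lies in the closed convex hull of $K$ and the Phragm\'en--Lindel\"of indicator
\[
h_\varphi(\theta) := \limsup_{r\to\infty}\frac{\log|E_\varphi(re^{i\theta})|}{r}
\]
is bounded by $h_K(\theta) := \sup_{\lambda\in K}\Reee(\lambda e^{i\theta})$. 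As $K\subset\overline{\D}$ meets $\T$ only at the $m$ points $e^{i\alpha_j}$, one checks $h_K(\theta) = 1$ precisely when $\theta\equiv -\alpha_j\pmod{2\pi}$ and $h_K(\theta)<1$ elsewhere. On the other hand, frequent hypercyclicity of $x$ makes $(\varphi(T^n x))_n$ unbounded for every non-zero $\varphi$ (the annulus $\{|z|>R\}$ must be visited with positive lower density through $\varphi$), which forces $\limsup_n|\varphi(T^n x)|^{1/n} = 1$. Consequently $E_\varphi$ has exponential type exactly $1$, the continuous trigonometrically convex function $h_\varphi$ attains the value $1$, and this necessarily happens at one of the $m$ rigid directions $-\alpha_j$.

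\textbf{Contradiction via Kronecker.} The final step is to show that confining the maximal-growth directions of the family $\{E_\varphi\}_{\varphi\in X^*}$ to the $m$ pairwise $\Q$-independent angles $\{-\alpha_1,\dots,-\alpha_m\}$ cannot coexist with the quantitative distribution information supplied by frequent hypercyclicity. Kronecker's theorem, applied to the $\Q$-linearly independent $\alpha_1,\dots,\alpha_m$, supplies, for any prescribed $\beta_1,\dots,\beta_m\in\R$ and any $\varepsilon>0$, a real $t$ with $|e^{i\alpha_j t}-e^{i\beta_j}|<\varepsilon$ for every $j$. This independence is used to arrange functionals $\varphi$ (or test arguments $\zeta$) for which the indicator values $h_\varphi(-\alpha_1),\dots,h_\varphi(-\alpha_m)$ are required to satisfy mutually incompatible lower bounds, in conflict with the rigid upper envelope $h_\varphi\le h_K$ and the trigonometric convexity of $h_\varphi$ on the complement of $\{-\alpha_j\}$. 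The principal difficulty, and the core technical step of the proof, is the quantitative translation from ``$\{n:T^n x\in U\}$ has positive lower density'' into sharp lower bounds on $|E_\varphi|$ along appropriate radii and angles; only with such a translation can the finite, $\Q$-independent set of admissible maximal-growth directions be shown too scarce to absorb the richness demanded by frequent hypercyclicity, yielding the contradiction.
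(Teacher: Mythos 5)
Your first two steps are sound and run roughly parallel to the paper's setup: the Riesz projection reduces matters to $\sigma(T)=K$, and your $E_\varphi(\zeta)=\sum_n\varphi(T^nx)\zeta^n/n!$ is precisely the inverse Borel transform of the resolvent function $w\mapsto\varphi((wI-T)^{-1}x)$, so its conjugate indicator diagram lies in $\conv(K)$ and the indicator $h_\varphi$ can only reach $1$ at the directions $-\alpha_j$. But your third step is not a proof; it is a description of what a proof would have to accomplish, and you say so yourself (``the principal difficulty, and the core technical step of the proof, is the quantitative translation from positive lower density into sharp lower bounds on $|E_\varphi|$''). That translation is the entire content of the theorem. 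Everything you actually establish (unboundedness of $(\varphi(T^nx))_n$, exponential type exactly $1$, $h_\varphi$ touching $1$ only at the $-\alpha_j$) follows from plain hypercyclicity and never invokes the lower density; and the indicator function, being a $\limsup$ along rays, is too coarse an invariant to detect lower densities, so no argument confined to the functions $h_\varphi$ can close the gap.

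Concretely, two ideas are missing. First, the passage from the iterates $T^n$ to values of an entire function on \emph{intervals} $[n,n+t_0]$: the paper conjugates differentiation to the translation operator via the transform $f\mapsto\Phi_{\log}f$ (Propositions \ref{phicontinuation}, \ref{interpolation} and \ref{banachToShift}), which converts ``$T^nx$ returns near a fixed vector on a set of lower density $d$'' into ``$|f-1|<1/2$ on $[n,n+t_0]$ for $n$ in a set of lower density at least $d/t_0$''. Your $E_\varphi$ is conjugate to differentiation, not translation, and yields no such interval information. Second, the mechanism of the contradiction in Theorem \ref{entireNotFrequently}: the paper decomposes $f=\sum_jf_j$ by Aronszajn's theorem (Lemma \ref{decomp}) with $K(f_j)$ in a disc of radius $d/2t_0m$ about $i\alpha_j$, uses the uniform version of Kronecker's theorem (Lemma \ref{kroneckerLemma}) to find in each interval $[n,n+t_0]$ a point $t$ at which all rotated pieces $e^{i\alpha_jt}h_j(t)$ lie in the left half-plane unless some real or imaginary part changes sign there, and then counts: the product $P=\prod_jR_jI_j$ acquires a zero in every such interval, hence a zero set of lower density at least $d/t_0$, while its exponential type is less than $d/t_0$, forcing $P\equiv0$ by Boas's theorem. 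It is exactly this density-versus-type comparison that uses frequent (rather than mere) hypercyclicity, and no analogue of it appears in your proposal.
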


By means of some deep results on frequent hypercyclicity, we will obtain the following consequences of the latter theorem.

\begin{corollary}\label{fourNecCond}
Let $T$ be a bounded linear operator on a Banach space $X$ and $K$ a component of $\sigma(T)$ which is closed and open in $\sigma(T)$ and so that $K\subset \{z:|z|\leq 1\}$ and $A=K\cap \T$ is a finite set $A=\{e^{i\alpha_1},...,e^{i \alpha_m}\}$. 
\begin{enumerate}
\item[(1)]If there are some $n\in\N$ and $r\in \R $ so that $\alpha_1 n+r,...,\alpha_m n+r$ are linearly independent over the field of rationals, then $T$ is not frequently hypercyclic.
\item[(2)]If for every choice of integers $c_1, c_2,..., c_m$ with $\sum_{j=1}^m c_j=0$ and $\sum_{j=1}^m c_j \alpha_j =0$, it follows that $c_1=c_2=...=c_m=0$, then $T$ is not frequently hypercyclic. 
\item[(3)]If $A$ contains at most two elements, then $T$ is not frequently hypercyclic.
\item[(4)]If $\alpha_1/\pi,...,\alpha_m/\pi$ are rationals, then $T$ is not frequently hypercyclic.
\end{enumerate}

\end{corollary}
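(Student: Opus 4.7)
My plan is to deduce all four parts from Theorem~\ref{thendl} by passing to a suitable modification of $T$. Three preservation properties of frequent hypercyclicity enter the argument: (i) $T^n$ is frequently hypercyclic whenever $T$ is; (ii) $\lambda T$ is frequently hypercyclic whenever $T$ is and $|\lambda|=1$; and (iii) each summand of a frequently hypercyclic direct sum is frequently hypercyclic. Property (iii) is elementary (project the frequently hypercyclic vector onto each factor), whereas (i) and (ii) are the non-trivial ``deep results'' on frequent hypercyclicity alluded to just before the corollary.

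As a preliminary reduction I use the Riesz functional calculus associated with the clopen component $K$ to write $T = T_1 \oplus T_2$ with $\sigma(T_1) = K$. By (iii), $T_1$ is frequently hypercyclic whenever $T$ is, so I may replace $T$ by $T_1$ and assume from now on that $\sigma(T) = K$. The point of this reduction is that every spectrum computed after applying (i) or (ii) will then coincide with the entire spectrum of the modified operator, so it is automatically a clopen connected component and no further separation needs to be verified.

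For part~(1), the operator $e^{ir}T^n$ is frequently hypercyclic by (i) and (ii); its spectrum $e^{ir}K^n$ meets $\T$ precisely in $\{e^{i(n\alpha_j+r)}:j=1,\dots,m\}$ by the spectral mapping theorem, and the hypothesis makes the exponents $\Q$-linearly independent, so Theorem~\ref{thendl} applies and produces a contradiction. For part~(2) I reduce to~(1) by choosing $r \in \R$ outside the (countable) $\Q$-linear span of $\alpha_1,\dots,\alpha_m$: an integer relation $\sum c_j(\alpha_j+r)=0$ with $\sum c_j \neq 0$ would put $r$ in the forbidden $\Q$-span, hence $\sum c_j=0$, then $\sum c_j\alpha_j=0$, and the hypothesis of~(2) forces $c_j=0$ for all $j$. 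For part~(3) the hypothesis of~(2) is automatic when $m\leq 2$: trivially for $m=1$, and for $m=2$ the system $c_1+c_2=0$, $c_1\alpha_1+c_2\alpha_2=0$ collapses to $c_1(\alpha_1-\alpha_2)=0$, so $c_1=c_2=0$ because $\alpha_1\neq\alpha_2$.

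Part~(4) requires a different reduction, since no affine map $\alpha_j\mapsto n\alpha_j+r$ can turn rational multiples of $\pi$ into a $\Q$-linearly independent set. Instead, I write $\alpha_j=\pi p_j/q_j$ and take $N = 2\,\mathrm{lcm}(q_1,\dots,q_m)$, so that every $e^{iN\alpha_j}=1$ and $K^N\cap\T=\{1\}$. By (i) and (ii), for any $r\in\R\setminus\{0\}$ the operator $e^{ir}T^N$ is frequently hypercyclic; its spectrum $e^{ir}K^N$ meets $\T$ only at the single point $e^{ir}$, and the one-element exponent set $\{r\}$ with $r\neq 0$ is trivially $\Q$-linearly independent, so Theorem~\ref{thendl} again gives a contradiction. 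The main obstacle across all four parts is the appeal to (i) and (ii), which are non-trivial preservation results for frequent hypercyclicity (unlike their counterparts for plain hypercyclicity); once these are granted each statement is a short formal reduction.
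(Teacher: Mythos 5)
Your proposal is correct and follows essentially the same route as the paper: reduce to $\sigma(T)=K$, use the preservation of frequent hypercyclicity under powers and unimodular rotations to pass to $e^{ir}T^n$, and choose $r$ avoiding a countable set to settle (2) and (3). The only deviations are cosmetic — you make the Riesz-decomposition reduction explicit where the paper cites Shkarin, and you route (4) directly through Theorem~\ref{thendl} via $e^{ir}T^N$ with a single nonzero exponent, whereas the paper funnels it through part (3); both amount to the same rotation argument.
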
 

It should be noted that \cite[Theorem 9.43]{linearChaosbookGEPer}, which is cited from a first version of \cite{grivauxFrequentClass}, contradicts our main result. However, the author of \cite{grivauxFrequentClass} has withdrawn this result and it does not appear in the final version of the latter work.  

Inspired by the work of S. Shkarin \cite{shkarinSpectrum}, the main result is shown by means of properties of entire functions at the end of this work. For that purpose we link the dynamics of the iterates $T^n$ with the translations $f(\cdot+n)$, $n\in \N$, of an entire function $f$. This is done in two steps, each of which establishes the (quasi) conjugacy of two operators. The following section deals with the first quasi conjugacy.


\section{Quasi conjugacy of a shift operator for holomorphic functions to bounded Banach space operators}
Let $X$, $Y$ be two topological vector spaces and $T:X\rightarrow X$, $S:Y\rightarrow Y$ continuous operators. Then $S$ is said to be \textbf{quasi conjugate} to $T$ if there exists a continuous mapping $\Psi :X\rightarrow Y$ with dense range and such that $\Psi\circ T=S\circ \Psi$. In case that $\Psi$ is an homeomorphism, that is $T$ is also quasi conjugate to $S$, $S$ and $T$ are called \textbf{conjugate}. (Quasi) conjugacy is a concept that is also used in the more general framework of dynamical systems and it is straightforward to show that the (frequent) hypercyclicity of $T$ implies the (frequent) hypercylicity of $S$ in the above situation, cf. \cite{linearChaosbookGEPer}. The aim of this section is to show the quasi conjugacy of a shift operator for holomorphic functions to bounded Banach space operators under certain conditions on their spectrum.  

We start with some notations.
For a compact set $K\subset \C$ we write $\Omega(K)$ for the unbounded component of $\C\setminus K$
and for an open set $\Omega\subset\C$, we denote by $H_0(\Omega)$ the space of holomorphic functions $f$ on $\Omega$ such that $|f(z)|\rightarrow 0$ as $|z|\rightarrow \infty$ in case that $\{z:|z|>R\}\subset \Omega $ for some $R>0$. Endowed with the usual topology of uniform convergence on compact subsets, it is known that $H_0(\Omega)$ is a Fr\'{e}chet space. 
We use the common notation $H(\C)$ for the space of entire functions endowed with the topology of uniform convergence on compact subsets.\\
The following terminology of cycles, introduced in \cite[Section 10.34]{rudinRealComplex}, will be convenient. In what follows a cycle will always be denoted by $\Gamma$, and by $|\Gamma|$ we mean the trace of $\Gamma$. 
We further say that $\Gamma$ is a\textbf{ Cauchy cycle} for a compact set $K$ in a domain $\Omega$, where $K\subset \Omega$, if $|\Gamma|\subset \Omega\setminus K$ and $\ind_\Gamma(u)=1$ for every $u\in K$ and $\ind_\Gamma(w)=0$ for every $w\in \C\setminus \Omega$. As usual, $\ind_\Gamma(u):=\frac{1}{2\,\pi\,i}\int_\Gamma \frac{1}{\xi-u}\,d\xi$, $u\in \C\setminus  |\Gamma|$.\\ 

For a bounded operator $T$ on a Banach space $X$ and a continuous functional $\Lambda$ on $X$ and a vector $x\in X$, consider the holomorphic function 
\[
z\mapsto \Lambda((z\I-T)^{-1}x)\ \ \ z\in \Omega(\sigma(T))
\]
which is induced by the \textbf{resolvent mapping}.
This mapping should be denoted by $\Psi_{\Lambda,T}(x)$. One observes that $\Psi_{\Lambda,T}(x)\in H_0(\Omega(\sigma(T)))$ and that $\Psi_{\Lambda,T}:X\rightarrow H_0(\Omega(\sigma(T)))$, $x\mapsto \Psi_{\Lambda,T}(x)$ defines a linear and continuous mapping (cf. \cite[Theorem 18.5]{rudinRealComplex}). 

For every $x$ and $\Lambda$, the function $\Psi_{\Lambda,T}(x)$ is known to admit the power series representation 
\begin{align}\label{powerSeriersRep}
\sum_{n=0}^\infty \Lambda(T^nx) z^{-(n+1)}
\end{align}
for $|z|>\max\{|u|:u\in \sigma(T)\}$ (cf. \cite{katznelson} page 245).

The power series in (\ref{powerSeriersRep}) gives rise to use $\Psi_{T,\Lambda}$ for the quasi conjugacy of the following shift operator to a bounded Banach space operator. For a domain $\Omega \subset \C $ such that $\{z:|z|>R\} \subset \Omega$ for some $R>0$, let $B: H_0(\Omega)\rightarrow H_0(\Omega)$ be the linear operator defined by $BF(z):=zF(z)-F^{'}(\infty)$. This operator coincides with the shift   
\[
\sum\limits_{n=0}^\infty \frac{a_n}{z^{n+1} }\mapsto \sum\limits_{n=0}^\infty \frac{a_{n+1}}{z^{n+1}}.
\]
in the power series representation.
For $\Omega=\Omega(\sigma(T))$, one immediately observes that $\Psi_{\Lambda,T} T =B\Psi_{\Lambda,T}$ so that in case that $\Psi_{\Lambda,T}$ has dense range, we obtain that $B$ is quasi conjugate to $T$. Our next result gives a sufficient condition for this situation.\\

\begin{theorem}\label{dense}
Let $X$ be a Banach space, $T$ a linear operator on $X$ not identical zero and such that $\sigma(T)$ has no isolated points. Then there exists a bounded  functional $\Lambda$ on $X$ such that $\Psi_{\Lambda,T}:X\rightarrow H_0(\Omega(\sigma(T)))$ has dense range.
\end{theorem}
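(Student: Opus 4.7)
The plan is to apply K\"othe–Silva duality for $H_0(\Omega(\sigma(T)))$, translate the density condition into an injectivity statement for the Riesz functional calculus adjoint, use the "no isolated points" hypothesis, and then exhibit $\Lambda$ by a Baire-category argument in $X^*$.

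Write $\tK:=\C\setminus\Omega(\sigma(T))$ for the polynomially convex hull of $\sigma(T)$. K\"othe–Silva duality identifies the continuous dual of $H_0(\Omega(\sigma(T)))$ with the space $H(\tK)$ of germs of holomorphic functions on $\tK$, via the pairing
\[
\langle F,g\rangle \;=\; \zwpi\int_\Gamma F(z)\,g(z)\,dz
\]
for a Cauchy cycle $\Gamma$ for $\tK$ in the common domain of $F$ and $g$. By Hahn–Banach, $\Psi_{\Lambda,T}$ has dense range iff no nonzero germ $g\in H(\tK)$ annihilates its image. A direct computation using the Cauchy-integral form of the Riesz functional calculus gives
\[
\langle\Psi_{\Lambda,T}(x),g\rangle \;=\; \Lambda(g(T)\,x),
\]
so the density condition reduces to the assertion that $g(T)^*\Lambda\ne 0$ for every nonzero germ $g\in H(\tK)$.

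The no-isolated-points hypothesis forces the calculus $g\mapsto g(T)$, $H(\tK)\to L(X)$, to be injective: if $g(T)=0$ then $g|_{\sigma(T)}\equiv 0$ by the spectral mapping theorem, and since every component of a small open neighborhood $U$ of $\tK$ meets $\sigma(T)$ at a non-isolated point (because each component of $\tK$ meets $\sigma(T)$, and $\sigma(T)$ is perfect), the identity theorem forces $g\equiv 0$ on $U$, hence as a germ. In particular, the hypothesis $T\ne 0$ is then used only to exclude the vacuous case $X=\{0\}$.

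It remains to exhibit $\Lambda\in X^*$ with $g(T)^*\Lambda\ne 0$ for every nonzero germ $g\in H(\tK)$. Each $\kernel g(T)^* = (\overline{\mathrm{range}\,g(T)})^\perp$ is a proper closed subspace of $X^*$, hence nowhere dense. Writing $H(\tK)=\bigcup_n H(U_n)$ as an inductive limit of separable Fr\'echet spaces for a decreasing sequence of open neighborhoods $U_n\downarrow \tK$, one chooses countable dense subsets $\{g_{n,k}\}_k\subset H(U_n)\setminus\{0\}$ and applies the Baire category theorem in the Banach space $X^*$ to find $\Lambda$ outside the meager union $\bigcup_{n,k}\kernel g_{n,k}(T)^*$. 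The delicate part — and the main obstacle I expect — is upgrading the non-vanishing from the countable dense family $\{g_{n,k}\}$ to every nonzero germ $g\in H(\tK)$: the annihilator $I_\Lambda:=\{g\in H(\tK):g(T)^*\Lambda=0\}$ is a closed ideal of $H(\tK)$, and the upgrade forces $I_\Lambda=\{0\}$ by combining the closed-ideal structure of $H(\tK)$ with the continuity of $g\mapsto g(T)^*\Lambda$ and the density of $\{g_{n,k}\}$, possibly refining the Baire selection to also avoid kernels of products $g_{n,k}(T)p(T)$ for polynomials $p$ with coefficients in a countable dense subring.
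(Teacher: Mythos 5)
Your reduction is correct as far as it goes: the K\"othe--Grothendieck duality between $H_0(\Omega(\sigma(T)))$ and the germs $H(\tK)$ on $\tK=\C\setminus\Omega(\sigma(T))$, the identity $\langle\Psi_{\Lambda,T}(x),g\rangle=\Lambda(g(T)x)$, and the injectivity of the Riesz calculus on germs when $\sigma(T)$ is perfect are all sound. The genuine gap is the final existence step. The Baire argument in $X^*$ only lets you avoid \emph{countably} many of the nowhere dense sets $\kernel\, g(T)^*$, while you need to avoid all of them for $g$ ranging over an uncountable space; non-vanishing of $g\mapsto g(T)^*\Lambda$ does not pass from a dense family $\{g_{n,k}\}$ to its limits, since a limit of nonzero vectors can be zero. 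Your proposed repair via the closed ideal $I_\Lambda$ does not close this: if $\Lambda$ happens to be an eigenvector of $T^*$ at some $\lambda_0\in\tK$, then $g(T)^*\Lambda=g(\lambda_0)\Lambda$, so $I_\Lambda$ is the maximal ideal at $\lambda_0$ --- a proper, nonzero, closed (indeed nowhere dense) ideal that a countable dense family $\{g_{n,k}\}$, and also all products $g_{n,k}\,p$ with $p$ from a countable set of polynomials, can perfectly well avoid. So as written the argument can output a $\Lambda$ for which $\Psi_{\Lambda,T}$ fails to have dense range, and making it work would require showing that the set of $\Lambda$ annihilated by some nonzero germ (equivalently, the set of finite linear combinations of root vectors of $T^*$ with eigenvalues in $\tK$) is avoidable --- which is not a countable union of nowhere dense sets in any obvious way.

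The paper avoids this uncountability problem entirely by working on the domain side rather than the dual side: it takes a countable dense sequence $(\lambda_n)$ in the approximate point spectrum (which contains $\partial\sigma(T)$), chooses approximate eigenvectors $x_n$ with $\|Tx_n-\lambda_nx_n\|<1/n^3$, and uses Ball's theorem to produce one $\Lambda$ with $|\Lambda(x_n)|\ge 1/n^2$ for all $n$. A short computation with the shift relation $B\Psi=\Psi T$ then shows $\Psi(x_n)/\Lambda(x_n)$ approximates the Cauchy kernel $z\mapsto 1/(z-\lambda)$ uniformly on compacta, and the span of these kernels for $\lambda\in\partial\sigma(T)$ is dense in $H_0(\Omega(\sigma(T)))$ by a Runge-type theorem (this is where ``no isolated points'' enters). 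Only countably many conditions on $\Lambda$ are ever imposed, which is exactly what your route is missing.
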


\begin{corollary}\label{quasic}
Let $X$ be a Banach space, $T$ a linear operator on $X$ not identical zero and such that $\sigma(T)$ has no isolated points. Then there exists a bounded functional $\Lambda$ on $X$ such that $B$ is quasi conjugate to $T$ via $\Psi_{\Lambda,T}$.
\end{corollary}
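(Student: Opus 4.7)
The plan is essentially a direct assembly of the pieces already laid out in the section, since almost all the required structure is in place before the corollary.

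First, I would invoke Theorem~\ref{dense} to produce a bounded linear functional $\Lambda$ on $X$ such that the induced map $\Psi_{\Lambda,T}:X\to H_0(\Omega(\sigma(T)))$ has dense range. This requires only that $T\neq 0$ and that $\sigma(T)$ has no isolated points, which are exactly the hypotheses of the corollary, so the theorem applies without modification.

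Next, I would verify that $\Psi_{\Lambda,T}$ is a valid candidate for the quasi-conjugacy in the sense of the definition recalled at the start of the section. Continuity and linearity of $\Psi_{\Lambda,T}$ from $X$ into $H_0(\Omega(\sigma(T)))$ were already recorded in the paragraph introducing the resolvent mapping (with reference to \cite[Theorem 18.5]{rudinRealComplex}). The intertwining identity $\Psi_{\Lambda,T}\circ T = B\circ \Psi_{\Lambda,T}$, where $B$ is the shift $F(z)\mapsto zF(z)-F'(\infty)$ on $H_0(\Omega(\sigma(T)))$, was also already observed: it follows at once from the power series representation~(\ref{powerSeriersRep}) of $\Psi_{\Lambda,T}(x)$, since applying $T$ to $x$ shifts the coefficients $\Lambda(T^n x)$ by one index, which is exactly the action of $B$ in the expansion at infinity.

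Combining these ingredients — continuity of $\Psi_{\Lambda,T}$, dense range supplied by Theorem~\ref{dense}, and the intertwining relation derived from the power series at infinity — yields, by the very definition of quasi conjugacy, that $B$ is quasi conjugate to $T$ via $\Psi_{\Lambda,T}$. There is no genuine obstacle at the corollary stage; the substance of the argument lies entirely in Theorem~\ref{dense}, whose proof (establishing that one can choose $\Lambda$ so that the functions $\Psi_{\Lambda,T}(x)$ are dense in $H_0(\Omega(\sigma(T)))$) is where the absence of isolated points in $\sigma(T)$ is genuinely needed.
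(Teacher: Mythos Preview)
Your proposal is correct and follows exactly the approach intended in the paper: the corollary is an immediate consequence of Theorem~\ref{dense} (giving dense range of $\Psi_{\Lambda,T}$) together with the already-noted continuity of $\Psi_{\Lambda,T}$ and the intertwining relation $\Psi_{\Lambda,T}\circ T = B\circ \Psi_{\Lambda,T}$. The paper does not supply a separate proof for the corollary precisely because this assembly is routine once Theorem~\ref{dense} is in hand.
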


In the proof of Theorem \ref{dense} we will apply the following result due to K. Ball (cf. \cite{ball_eins}). 
\begin{unntheorem}[Ball]\label{funcEx}
Let $X$ be a Banach space, $(x_n)$ a sequence in $X$ such that $||x_n||=1$ for all $n\in \N$, and $(a_n)$ a sequence of non-negative numbers such that $\sum_{n=1}^\infty a_n\leq1$. Then there exists a continuous functional $\Lambda$ on $X$ such that $|\Lambda(x_n)|\geq a_n$ for all $n\in\N$ and $||\Lambda||\leq 1$. 
\end{unntheorem}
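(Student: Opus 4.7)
The plan is to reduce the statement to finite index sets by a weak-$\ast$ compactness argument, then translate the resulting problem into a convex-geometric question in $\R^N$ about a centrally symmetric convex body meeting a union of orthants, and finally overcome the non-convexity by a sign-selection argument. For each $n$ the set $F_n = \{\Lambda \in X^* : \|\Lambda\|\le 1,\ |\Lambda(x_n)|\ge a_n\}$ is weak-$\ast$ closed in the weak-$\ast$ compact unit ball of $X^*$, so by the finite intersection property it suffices to produce $\Lambda\in\bigcap_{n=1}^N F_n$ for each finite $N$; from now on I assume $N$ finite.

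For fixed $N$ I consider the closed, convex, centrally symmetric set
\[
K=\{(\Lambda(x_1),\ldots,\Lambda(x_N)):\Lambda\in B_{X^*}\}\subset\R^N
\]
and want $K$ to meet the target set $\{b\in\R^N:|b_j|\ge a_j\text{ for all } j\}$. The target decomposes as the union of $2^N$ closed orthant-translates $O_\epsilon=\{b:\epsilon_j b_j\ge a_j\ \forall j\}$, $\epsilon\in\{\pm1\}^N$, and this is the non-convex ingredient. For a single fixed $\epsilon$, Hahn--Banach separation between the compact convex set $K$ and the closed convex set $O_\epsilon$ yields that $K\cap O_\epsilon\ne\emptyset$ if and only if
\[
\sum_{j=1}^N \nu_j a_j\le \Bigl\|\sum_{j=1}^N \nu_j\epsilon_j x_j\Bigr\|\qquad\text{for every }\nu\in\R_{\ge 0}^N,
\]
because $\sup_{b\in K}\mu\cdot b=\|\sum_j \mu_j x_j\|$ while $\inf_{b\in O_\epsilon}\mu\cdot b$ is finite exactly when $\mu_j\epsilon_j\ge 0$ for all $j$, after the substitution $\nu_j=\mu_j\epsilon_j$.

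The task is therefore to produce one sign vector $\epsilon\in\{\pm1\}^N$ for which the inequality above holds simultaneously for all $\nu\ge 0$. I would argue by contradiction: assume every sign pattern $\epsilon$ admits a violating $\nu^{(\epsilon)}$, and combine the $2^N$ witnesses to contradict $\sum_j a_j\le 1$. One harmless simplification is the symmetry $O_\epsilon\leftrightarrow O_{-\epsilon}$: since $K=-K$, the two orthants meet $K$ simultaneously, so one may pass to equivalence classes modulo $\epsilon\mapsto -\epsilon$ and fix e.g.\ $\epsilon_1=+1$.

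I expect this sign selection to be the main obstacle. The inequality must hold uniformly in $\nu$, not just in a mean sense, so a direct Rademacher averaging over $\epsilon$ does not close the loop; a more delicate convex-geometric argument, exploiting that $\sum a_j\le 1$ forces the ``planks'' $\{b:|b_j|<a_j\}$ to have small total width inside the symmetric body $K$, is needed. An alternative route is an inductive construction: given $\Lambda_k\in B_{X^*}$ meeting the first $k$ constraints, perturb it by a small multiple of a Hahn--Banach norming functional for $x_{k+1}$, using the remaining budget $\sum_{j>k}a_j\le 1-\sum_{j\le k}a_j$ to keep $\|\Lambda_{k+1}\|\le 1$ while gaining the $(k{+}1)$-st condition; the delicate point is the bookkeeping that prevents the earlier conditions from being spoiled when the perturbation is added, which is exactly where the $\ell^1$ structure of the bound $\sum a_j\le 1$ has to be used in full.
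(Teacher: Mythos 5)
This statement is not proved in the paper at all: it is K.~Ball's plank theorem for symmetric bodies, imported as a black box from \cite{ball_eins} and used in the proof of Theorem \ref{dense}. So there is no in-paper argument to compare against; the only question is whether your proposal constitutes a proof on its own, and it does not. The reductions you carry out are correct and standard: the weak-$\ast$ compactness argument via the finite intersection property is fine (each $F_n$ is weak-$\ast$ closed because $\Lambda\mapsto\Lambda(x_n)$ is weak-$\ast$ continuous), and the Hahn--Banach dualization for a fixed sign pattern $\epsilon$ is also correct, including the computation $\sup_{b\in K}\mu\cdot b=\bigl\|\sum_j\mu_jx_j\bigr\|$. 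But at the point where the actual content of the theorem begins --- producing a single $\epsilon\in\{\pm1\}^N$ such that $\sum_j\nu_ja_j\le\bigl\|\sum_j\nu_j\epsilon_jx_j\bigr\|$ holds for \emph{all} $\nu\ge0$ simultaneously --- you stop and explicitly record that you do not know how to proceed. Your own diagnosis is accurate: sign-averaging gives, for each fixed $\nu$, the bound $\mathbb{E}_\epsilon\bigl\|\sum_j\epsilon_j\nu_jx_j\bigr\|\ge\max_j\nu_j\ge\sum_j\nu_ja_j$, hence \emph{some} good $\epsilon$ for that $\nu$, but the quantifiers are in the wrong order and no uniform choice follows. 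The alternative inductive route you sketch (perturbing by norming functionals with a budget $1-\sum_{j\le k}a_j$) likewise founders exactly where you say it does: a naive perturbation destroys the earlier constraints, and no mechanism is offered to prevent this.

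That missing step is not a technicality; it is the entire theorem. Ball's actual proof does not go through a separation-plus-sign-selection scheme at all. It constructs the functional directly as $\Lambda=\sum_n\lambda_n\phi_n$ with $\phi_n$ norming functionals for $x_n$, and rests on a self-contained matrix lemma: for any $N\times N$ matrix $(h_{jk})$ with $h_{jj}=1$ and nonnegative $t_j$ with $\sum t_j=1$, there is a vector $\lambda$ with $\sum_j t_j\lambda_j^2\le1$ and $\bigl|\sum_k\lambda_kh_{kj}\bigr|\ge t_j$ for every $j$; the vector is obtained by a careful variational argument (maximizing a quadratic form over a product of intervals and analyzing the maximizer). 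Nothing in your write-up substitutes for this lemma, so the proposal should be regarded as a correct reformulation of the problem rather than a proof of it. If you want to complete it along your own lines, be aware that the orthant-selection problem you have arrived at is equivalent to the original plank statement in $\R^N$ with the norm $\bigl\|\sum\nu_jx_j\bigr\|$, so no progress has actually been made past the finite-dimensional reduction.
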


\begin{proof}[Proof of Theorem \ref{dense}]
We construct a functional $\Lambda$ such that the linear hull of the images under $\Psi:=\Psi_{\Lambda,T}$ of vectors corresponding to the approximate spectrum of $T$ is dense in $H_0(\Omega(\sigma(T)))$.
Every point on the boundary $\partial\sigma(T)$ of $\sigma(T)$ is part of the approximate spectrum (cf. \cite[Proposition VII.6.7]{conwayFA}). That means, for every $\lambda\in\partial\sigma(T)$ and every $\varepsilon>0$ we can choose some $x\in X$, $||x||=1$, such that $||Tx-\lambda x||<\varepsilon$.
Let $(\lambda_n)$ be a countable dense subset of the approximate spectrum. We choose vectors $(x_n)$, $n\in\N$, corresponding to $(\lambda_n)$ in a way that $||x_n||=1$, $||Tx_n-\lambda_n x_n||<1/n^3$ for every $n\in\N$. Then the above theorem yields a bounded functional $\Lambda$ such that $|\Lambda(x_n)|\geq 1/n^2$.
Now let $L$ be an arbitrary compact set in $\Omega(\sigma(T))$. The continuity of $\Psi$ implies the existence of some $M>0$ such that $\sup_{z\in L}|\Psi(y)(z)-\Psi(x)(z)|<M||y-x||$ for all $x,y\in X$. For an arbitrary $\lambda\in \partial \sigma(T)$ with $2 \sup_{z\in L}|1/(z-\lambda)|=C<\infty$ and $\varepsilon>0$, we may choose $\lambda_n$ being so close to $\lambda$ that 
\begin{align}\label{propOfn}
\sup_{z\in L}\left|\frac{1}{z-\lambda_n}\right|<C,\ \ M C \frac{1}{n} <\frac{\varepsilon}{2} \textnormal{  and  } \sup\limits_{z\in L}\left| \frac{1}{z-\lambda}-\frac{1}{z-\lambda_n}\right|<\frac{\varepsilon}{2}.
\end{align}
For the vector $x_n$ that corresponds to this $\lambda_n$, the property $B \Psi=\Psi T$ implies $\sup_{z\in L}|B\Psi(x_n)(z)-\lambda_n\Psi(x_n)(z)|<M/n^3$. With the representation $\Psi(x_n)(z)=\sum_{n=0}^\infty \frac{a_n}{z^{n+1} }$, this is 
\[
\sup\limits_{z\in L} \left|z \,\Psi(x_n)(z)-a_0-\lambda_n\Psi(x_n)(z)\right|<M \frac{1}{n^3},
\]
or equivalently
\[
\sup\limits_{z\in L} \left|z-\lambda_n\right|\,\left|\Psi(x_n)(z)-\frac{a_0}{z-\lambda_n}\right|<M \frac{1}{n^3}.
\]
The application of the first inequality of (\ref{propOfn}) to the above inequality gives 
\[
\sup\limits_{z\in L} \left|\Psi(x_n)(z)-\frac{a_0}{z-\lambda_n}\right|< \frac{M C}{n^3}.
\]
By construction, $|\Lambda(x_n)|=a_0\geq 1/n^2$, so that by means of the second and third inequality of (\ref{propOfn}) we can conclude
\[
\sup\limits_{z\in L} \left|\frac{\Psi(x_n)(z)}{a_0}-\frac{1}{z-\lambda}\right|< \frac{M C}{n}+\frac{\varepsilon}{2}<\varepsilon.
\] 
Since $\sigma(T)$ has no isolated points, $\linspan\{z\mapsto 1/(z-\lambda):\lambda\in \partial \sigma(T)\}$ is dense in $H_0(\Omega(\sigma(T)))$ by a variant of Runge's theorem (cf. \cite[Theorem 10.2]{lueckinRubel}). Thus the last inequality proves the assertion.
\end{proof}


\section{Connection of iterates of bounded operators to translations of entire functions}

In this section we link the spectrum of linear bounded operators with the growth of entire functions of exponential type. 
For the sake of completeness, we recall that an entire function $f$ is said to be \textbf{of exponential type $\tau$} if 
\[
 \limsup\limits_{r\rightarrow\infty}\frac{\log M_f(r)}{r}=:\tau(f)=\tau
\]
and, more general, $f$ is said to be a function of exponential type when the above $\limsup$ is finite.
For a function $f$ of exponential type, $\Bo f(z):=\sum_{n=0}^\infty f^{(n)}(0)/z^{n+1}$ is called the \textbf{Borel transform} of $f$. Obviously, the function $\Bo f$ is a holomorphic on some neighbourhood of infinity that vanishes at infinity. \\ 
Our proofs are based on the investigation of the possible behaviour of functions of exponential type in the direction of the positive real line under some conditions on a domain of holomorphy of their Borel transform. For a given entire function of exponential type $f$ we denote by $\A(f)$ the set of compact sets $K\subset\C$ such that the Borel transform $\Bo f$ admits an analytic continuation to $\Omega(K)$. We also use the notation $\Bo f$ for an analytic continuation of the Borel transform. The \textbf{conjugate indicator diagram} of $f$ is the intersection of all convex sets in $\A(f)$ and will be denoted by $K(f)$ in what follows. The convex hull of a set $M\subset\C $ is denoted by $\conv(M)$. We further remark that $K(f)\subset\{z:|z|\leq r\}$, $r\geq 0$, implies that $f$ is a function of exponential type less or equal than $r$. The most natural examples of entire functions of exponential type are the \textbf{exponential functions} $z\mapsto e^{\alpha z}$, $\alpha\in \C$, which are abreviated by $e_\alpha$ in the following. For these functions we have $\{\alpha\}=K(e_\alpha)\in \A(e_\alpha)$ and $\Bo e_\alpha(z)=1/(\alpha-z)$.

For a given compact and convex set $K\subset \C$ we denote by $\Exp(K)$ the set of entire functions $f$ of exponential type that satisfy $K(f)\subset K$. Actually, this set is a vector space that can be endowed with a natural topology which makes it a
Fr\'{e}chet space. To see this, consider the \textbf{support function} defined by 
\[
H_K(z) :=\sup\limits\{\Reee(zu):u\in K\},  \quad z \in \C
\]
of the convex set $K$. The support function of the conjugate indicator diagram gives an estimate of the growth of its entire function in the following manner. If $f$ is an entire function of exponential type then 
the \textbf{indicator function} $h_f$, defined by
\[
 h_f(\theta):=\limsup\limits_{r\rightarrow\infty} \frac{\log |f(re^{i\theta})|}{r},\ \ \theta\in[-\pi,\pi]
\]
satisfies
\begin{equation}\label{suppIndic}
 r\, h_f(\theta)=H_{K(f)}(z)
\end{equation}
for $z=re^{i\theta}$ (cf. \cite{berenstein}). The topology on $\Exp(K)$ is induced by the norms
\[
||f||_{K,n}:=\sup\limits_{z\in\C}|f(z)|\,e^{-H_{K}(z)-\frac{1}{n}|z|}, \quad n\in\N.
\]
For more detailed information we refer to  \cite{martineau}, \cite{morimoto}, \cite{berenstein}.

We establish a connection between the iterates of bounded operators and the translations of entire functions of exponential type. For that purpose we introduce a transform that is studied in more details in \cite{beiseMuellerDifferential}. A function $f$ of exponential type admits the representation
\[
f(z)=\frac{1}{2\pi i} \int_\Gamma \Bo f(\xi)\, e^{\xi z}\, d\xi
\]
where $\Gamma$ is a Cauchy cycle for $K(f)$ in $\C$. This integral formula is known as the \textbf{P\'{o}lya representation}. Given a function $\varphi$ holomorphic on some open neighbourhood of $K(f)$, we set 
\[
\Phi_\varphi f(z)=\frac{1}{2\pi i} \int_\Gamma \Bo f(\xi)\, e^{\varphi(\xi) z}\, d\xi
\]
where $\Gamma$ is a Cauchy cycle for $K(f)$ in the domain of holomorphy of $\varphi$. One directly observes that $\Phi_\varphi f$ is a function of exponential type. Actually, it is not hard to see that in the last two integral formulas, $\Gamma$ can be chosen as a Cauchy cycle for any $K\in \A(f)$ and so that $|\Gamma|\subset \Omega(K)$, provided that $\varphi$ is holomorphic on some open neighbourhood of $K$ in the last formula. Thus for all $K\in \A(f)$ and $\varphi$ holomorphic on some neighbourhood of $K$, the transform $f\mapsto \Phi_\varphi f$ is well defined.

\begin{proposition}\label{phicontinuation}
Let $f$ be a function of exponential type, $K\in \A(f)$ and $\varphi$ a holomorphic function on some open neighbourhood of $K$, then $\varphi(K)\in \A(\Phi_\varphi f)$. 
\end{proposition}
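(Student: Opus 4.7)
The plan is to produce an explicit integral representation for $\Bo(\Phi_\varphi f)$ that manifestly extends holomorphically to $\Omega(\varphi(K))$. Starting from the defining formula $\Phi_\varphi f(z) = \zwpi \int_\Gamma \Bo f(\xi) e^{\varphi(\xi) z} d\xi$, differentiating $n$ times in $z$ at the origin under the integral gives $(\Phi_\varphi f)^{(n)}(0) = \zwpi \int_\Gamma \Bo f(\xi) \varphi(\xi)^n d\xi$. Inserting these coefficients into the Borel series and swapping sum and integral---legal for $|z| > \max_{\xi \in |\Gamma|} |\varphi(\xi)|$ by uniform convergence of the geometric series---yields the key identity
\[
\Bo(\Phi_\varphi f)(z) = \zwpi \int_\Gamma \frac{\Bo f(\xi)}{z - \varphi(\xi)}\, d\xi,
\]
valid for large $|z|$, where $\Gamma$ is any admissible Cauchy cycle for $K$.

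Next I would extend this formula to $\Omega(\varphi(K))$ locally. Fix an arbitrary $z_0 \in \Omega(\varphi(K))$. Since $\varphi(K)$ is compact and $z_0 \notin \varphi(K)$, continuity of $\varphi$ on a neighborhood of $K$ provides an open set $V$ with $K \subset V$ contained in the domain of $\varphi$ such that $\varphi(\overline{V})$ stays bounded away from some small disc $W$ around $z_0$. Choose a Cauchy cycle $\Gamma$ for $K$ in $V$ with $|\Gamma| \subset \Omega(K)$, as permitted by the paper's earlier discussion. Then for every $z \in W$ the integrand in the displayed formula is holomorphic in $\xi$ on a neighborhood of $|\Gamma|$, and a standard differentiation-under-the-integral argument (or Morera's theorem) shows that the integral defines a holomorphic function of $z$ on $W$.

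Finally, the local extensions obtained in this way coincide with $\Bo(\Phi_\varphi f)$ near infinity by the first step, hence with one another on overlaps by the identity theorem inside the connected domain $\Omega(\varphi(K))$. Patching them therefore produces a single analytic continuation, establishing $\varphi(K) \in \A(\Phi_\varphi f)$. The main technical point is the construction of $V$ in the second step: one must arrange simultaneously that $\varphi$ is holomorphic on $V$, that $\varphi(\overline{V})$ avoids $W$, and that a Cauchy cycle for $K$ can be placed in $V \cap \Omega(K)$. Compactness of $\varphi(K)$ together with uniform continuity of $\varphi$ on a compact neighborhood of $K$ makes this routine, and this is also the single place where the hypothesis $z_0 \notin \varphi(K)$ is used in an essential way.
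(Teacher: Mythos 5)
Your proof is correct and takes essentially the same route as the paper: both exhibit the analytic continuation as the Cauchy-type integral $\zwpi\int_\Gamma \Bo f(\xi)/(z-\varphi(\xi))\,d\xi$ over a Cauchy cycle for $K$ in a neighbourhood chosen (for each $z$ off $\varphi(K)$) so that $\varphi$ omits the value $z$ there. The only difference is the identification step: you expand $1/(z-\varphi(\xi))$ as a geometric series to match the Borel coefficients $(\Phi_\varphi f)^{(n)}(0)$ near infinity, whereas the paper applies the P\'{o}lya representation to the candidate function and recovers $\Phi_\varphi f$ via the Cauchy integral formula --- two directions of the same duality.
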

\begin{proof}
For an open neighbourhood $U$ of $K$, such that $\varphi$ is holomorphic on $U$, let $\Gamma$ be a Cauchy cycle for $K$ in $U$ with $|\Gamma|\subset \Omega(K)$. Then for $w\in\C\setminus \varphi(K)$ such that $\{z:\varphi(z)=w\}\cap U=\emptyset$ we set
\[
H_\varphi(w):= \frac{1}{2\pi i} \int_{\Gamma} \frac{\Bo f(\xi)}{w-\varphi(\xi)}\,d\xi.
\]
As $U$ can be arbitrary close to $K$, we obtain a holomorphic function $H_\varphi\in H_0(\Omega(\varphi(K)))$.
Now, given a neighbourhood $U$ of $K$ as above, and $\Gamma$ a Cauchy cycle for $K$ in $U$, we take a Cauchy cycle $\Gamma_0$  for $\varphi(K)\cup \varphi(|\Gamma|)$ in $\varphi(U)$. Then 
\begin{align*}
\frac{1}{2\pi i} \int_{\Gamma_{0}} H_\varphi(w)  \,e^{w z}\,dw &=\frac{1}{2\pi i} \int_{\Gamma}\Bo f(\xi)\,\frac{1}{2\pi i} \int_{\Gamma_{0}}\frac{e^{w z}}{w-\varphi(\xi)}\, dw \,d\xi\\[2mm]
&=\frac{1}{2\pi i} \int_{\Gamma}\Bo f(\xi)\,e^{\varphi(\xi)z}\,d\xi\\[2mm]
&=\Phi_\varphi f(z)
\end{align*} 
by the Cauchy integral formula. This proves $H_\varphi=\Bo \Phi_\varphi f$ on $\Omega(\varphi(K))$. 
\end{proof}

Let us denote by $T_1$ the translation operator that maps an entire function $f$ to $f(\cdot +1)$ and by $D$ the differentiation operator.
\begin{proposition}\label{interpolation}
Let $f$ be a function of exponential type and $K\in \A(f)$ so that a branch of the logarithm $\log$ is defined on some open neighbourhood of 
$K$, then we have $\Phi_{\log} D f=T_1 \Phi_{\log} f$.
\end{proposition}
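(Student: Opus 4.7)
The plan is to compute both sides directly from the integral formula defining $\Phi_{\log}$, reduce the desired identity to an algebraic relation between the Borel transforms of $f$ and $Df$, and then clear the residual term by Cauchy's theorem. Throughout I would work with a single Cauchy cycle $\Gamma$ for $K$ inside a neighborhood $U$ of $K$ on which the branch $\log$ is holomorphic, so that $\xi^z := e^{z\log\xi}$ is a well-defined holomorphic function of $\xi$ on $U$ for every $z\in\C$.

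First I would expand the right-hand side: by definition
\[
T_1\Phi_{\log}f(z)=\Phi_{\log}f(z+1)=\frac{1}{2\pi i}\int_\Gamma \Bo f(\xi)\,\xi\,e^{z\log\xi}\,d\xi.
\]
For the left-hand side I need the Borel transform of $Df$. From the power series $\Bo g(w)=\sum_{n\ge 0} g^{(n)}(0)\,w^{-(n+1)}$ applied to both $g=f$ and $g=Df$, I would derive the elementary identity
\[
\Bo(Df)(w)=w\,\Bo f(w)-f(0)
\]
valid near infinity, and then extend it by analytic continuation to $\Omega(K)$. A byproduct is that $K\in\A(Df)$ as well, so $\Phi_{\log}Df$ is defined using the same cycle $\Gamma$.

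Substituting this identity into the definition of $\Phi_{\log}Df$ and subtracting gives
\[
T_1\Phi_{\log}f(z)-\Phi_{\log}Df(z)=\frac{f(0)}{2\pi i}\int_\Gamma e^{z\log\xi}\,d\xi.
\]
To conclude I would invoke the homological form of Cauchy's theorem: since $\xi\mapsto\xi^z$ is holomorphic on $U$ and $\ind_\Gamma(w)=0$ for every $w\in\C\setminus U$ by the definition of a Cauchy cycle for $K$ in $U$, the remaining integral vanishes.

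The argument is essentially routine; the only point requiring care is the very first one, namely that the hypothesis "a branch of $\log$ is defined on some open neighbourhood of $K$" is exactly what is needed to make $\xi^z$ a \emph{single-valued} holomorphic function on $U$ so that Cauchy's theorem applies without monodromy issues. The algebraic identity for $\Bo(Df)$ is the reason one expects precisely a unit shift $T_1$ to appear rather than some more complicated transformation.
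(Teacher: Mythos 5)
Your proposal is correct and follows essentially the same route as the paper, which deduces the statement from the identity $\Bo(Df)(\xi)=\xi\,\Bo f(\xi)-f(0)$ and the relation $\xi^{z+1}=\xi\cdot\xi^{z}$ over the same Cauchy cycle. The only detail you make explicit that the paper leaves implicit is the vanishing of the residual $f(0)\int_\Gamma \xi^{z}\,d\xi$ term via the homological Cauchy theorem, which is indeed the right justification.
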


The assertion of Proposition \ref{interpolation} can be immediately deduced from the following identity
\[
\frac{1}{2\pi i} \int_{\Gamma} \Bo( Df(\xi)) \xi^z d\xi=\frac{1}{2\pi i} \int_{\Gamma} \left(\xi\Bo f(\xi)- f(0)\right)\xi^z d\xi= \frac{1}{2\pi i} \int_{\Gamma} \Bo f(\xi)\xi^{z+1} d\xi,
\]
where $\xi^z=e^{z \log \xi}$. For more details we refer to proof of \cite[Proposition 3.4 (2)]{beiseMuellerDifferential}.

Setting $F(\xi):=1/\xi\, \Bo f(1/\xi)$ in the situation of Proposition \ref{interpolation}, $F$ is holomorphic off $K^{-1}$ and vanishes at infinity. The latter result then states that the Taylor coefficients of $F$ at the origin coincide with evaluations of $\Phi_{\log} f$ at the non-negative integers, that is $F^{(n)}(0)/n!=\Phi_{\log} f(n)$ for $n=0,1,...$ . In these terms the assertion of the latter proposition is well known from the theory of analytic continuation, c.f.  \cite[Theorem 1.3.III]{bieberbach}.  

\begin{corollary}\label{shiftalpha}
Let $f$ be a function of exponential type and $\alpha\in \C$, then $ \alpha+K\in \A(e_\alpha f)$ for all $K\in \A(f)$.
\end{corollary}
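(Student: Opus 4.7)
The plan is to read this off immediately from Proposition \ref{phicontinuation} by choosing the biholomorphic translation $\varphi(\xi) := \xi+\alpha$. This $\varphi$ is entire, so it is holomorphic on any open neighbourhood of $K$, and hence the hypotheses of Proposition \ref{phicontinuation} are satisfied. Since $\varphi(K) = \alpha + K$, applying that proposition will give $\alpha + K \in \A(\Phi_\varphi f)$, so all that remains is to identify $\Phi_\varphi f$ with $e_\alpha f$.

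The identification is done by a one-line calculation from the P\'olya representation. Fix a Cauchy cycle $\Gamma$ for $K$ in $\C$ (allowed since $K\in\A(f)$, as noted in the paragraph preceding Proposition \ref{phicontinuation}). Then
\[
\Phi_\varphi f(z) \;=\; \frac{1}{2\pi i}\int_\Gamma \Bo f(\xi)\, e^{(\xi+\alpha)z}\, d\xi \;=\; e^{\alpha z}\cdot \frac{1}{2\pi i}\int_\Gamma \Bo f(\xi)\, e^{\xi z}\, d\xi \;=\; e^{\alpha z} f(z) \;=\; (e_\alpha f)(z),
\]
where the second-to-last equality is the P\'olya representation of $f$ itself. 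Combining this with Proposition \ref{phicontinuation} yields $\alpha+K = \varphi(K) \in \A(\Phi_\varphi f) = \A(e_\alpha f)$, as desired. There is no real obstacle here: the statement is essentially a corollary of Proposition \ref{phicontinuation} for the particular affine holomorphic map $\varphi(\xi)=\xi+\alpha$, and the only ingredient beyond that proposition is the trivial factoring out of the constant $e^{\alpha z}$ from the integrand.
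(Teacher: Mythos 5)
Your proposal is correct and coincides with the paper's own proof: both take $\varphi(\xi)=\xi+\alpha$, factor $e^{\alpha z}$ out of the P\'olya representation to identify $\Phi_\varphi f$ with $e_\alpha f$, and then invoke Proposition \ref{phicontinuation}. Nothing is missing.
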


\begin{proof}
Making use of the P\'{o}lya representation, we have
\[
(e_\alpha f)(z)=\frac{1}{2\pi i} \int_\Gamma \Bo f(\xi)\, e^{\alpha z+\xi z}\, d\xi.
\]
With $\varphi(\xi)=\alpha +\xi$, this is $e_\alpha f=\Phi_\varphi f$ so that the assertion follows from Proposition \ref{phicontinuation}.
\end{proof}

\begin{lemma}\label{decomp}
Let $f$ be an entire function of exponential type and $K\in \A(f)$. Then for every finite collection of compact sets $K_0,...,K_m$ so that $\bigcup_{j=0}^m K_j=K$, there exist functions $f_0,...,f_m$ of exponential type such that $\sum_{j=0}^m f_j=f$ and $K_j\in \A(f_j)$ for all $j=0,...,m$. 
\end{lemma}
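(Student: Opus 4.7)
I would argue by induction on $m$, with the inductive step reducing everything to the case $m=1$, $K=K_0\cup K_1$: given the two-set result, I write $K=(K_0\cup\cdots\cup K_{m-1})\cup K_m$, apply it to obtain $f=\tilde f+f_m$ with $K_0\cup\cdots\cup K_{m-1}\in\A(\tilde f)$ and $K_m\in\A(f_m)$, and then invoke the inductive hypothesis on $\tilde f$.

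For the base case $m=1$ the plan is to split a Cauchy cycle for $K$ into pieces adapted to $K_0$ and $K_1$ and then use the P\'olya representation. I would select open neighbourhoods $U_j\supset K_j$ contained in $\Omega(K)\cup K$, together with Cauchy cycles $\Gamma_j$ for $K_j$ inside $U_j$, arranged so that $|\Gamma_j|\subset U_j\setminus K\subset\Omega(K)$ and so that $\Gamma:=\Gamma_0+\Gamma_1$ is a Cauchy cycle for $K$. Since $\Bo f$ is holomorphic on $\Omega(K)\supset|\Gamma|$, I then define
\[
 f_j(z):=\frac{1}{2\pi i}\int_{\Gamma_j}\Bo f(\xi)\,e^{\xi z}\,d\xi, \qquad j=0,1,
\]
which produces entire functions of exponential type, and the P\'olya representation of $f$ along $\Gamma$ gives $f=f_0+f_1$ by additivity of the integral.

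To verify $K_j\in\A(f_j)$, I expand $e^{\xi z}$ inside the integral to read off the Taylor coefficients $f_j^{(n)}(0)=\frac{1}{2\pi i}\int_{\Gamma_j}\Bo f(\xi)\xi^n\,d\xi$, from which summation yields
\[
 \Bo f_j(\xi)=-\frac{1}{2\pi i}\int_{\Gamma_j}\frac{\Bo f(w)}{w-\xi}\,dw
\]
on a neighbourhood of infinity. The right-hand side is manifestly holomorphic in $\xi$ on the unbounded component of $\C\setminus|\Gamma_j|$; shrinking $\Gamma_j$ towards $K_j$ inside $U_j\setminus K$ and using Cauchy's theorem to compare different choices pastes these integrals into a single analytic continuation of $\Bo f_j$ to all of $\Omega(K_j)$, so that $K_j\in\A(f_j)$.

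The main obstacle will be the geometric construction of the cycles $\Gamma_j$: each must lie in $\Omega(K)$ so that the integrand is defined, must have index $1$ on every point of $K_j$, and the two together must form a Cauchy cycle for $K$. This is routine when the $K_j$ are well-separated inside $K$, but when $K_0\cap K_1\neq\emptyset$ or bounded complementary components of $K$ interleave with the $K_j$ the selection becomes delicate; in such cases I would either thicken or slightly shrink the $K_j$ as an auxiliary step before passing to a limit, or replace the cycle construction altogether by a $\bar\partial$-decomposition built from a smooth partition of unity subordinate to an open cover of $K$ adapted to the $K_j$.
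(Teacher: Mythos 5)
Your reduction to the two-set case and your computation of the Taylor coefficients of $f_j$ are both fine, but the core of your plan --- splitting a Cauchy cycle for $K$ into a cycle $\Gamma_0$ adapted to $K_0$ and a cycle $\Gamma_1$ adapted to $K_1$, each lying in $\Omega(K)$ --- cannot work in general, and in particular fails in exactly the situation the lemma is used for. If $K$ is connected (e.g.\ $K=[0,1]$, $K_0=[0,1/2]$, $K_1=[1/2,1]$), then any cycle with trace in $\C\setminus K$ has locally constant index on $\C\setminus|\Gamma|$ and hence \emph{constant} index on all of $K$; so a Cauchy cycle for $K_0$ contained in $\Omega(K)$ automatically has index $1$ on every point of $K_1$ as well, and $\Gamma_0+\Gamma_1$ then has index $2$ on $K$, giving $f_0+f_1=2f$. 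There is no choice of cycles in $\Omega(K)$ with the properties you require, and no amount of thickening or shrinking of the $K_j$ repairs this, since the obstruction is topological. In the paper's application $K$ is (the logarithmic image of) a connected spectral component covered by overlapping pieces, so this is not a degenerate corner case but the main one.

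The paper avoids contour surgery entirely: it first invokes Aronszajn's separation-of-singularities theorem to write $\Bo f=\sum_{j=0}^m F_j$ with $F_j\in H_0(\C\setminus K_j)$, and only then applies the P\'olya integral to each $F_j$ separately over a Cauchy cycle for $K_j$ alone (which is now unproblematic, since $F_j$ is holomorphic off $K_j$); the coefficient computation you carry out then shows $F_j=\Bo f_j$, exactly as in your last displayed formula. Your fallback suggestion --- a $\bar\partial$-decomposition via a partition of unity subordinate to a cover adapted to the $K_j$ --- is indeed one standard way to \emph{prove} Aronszajn's theorem, but as written it is a one-line gesture at what is in fact the entire analytic content of the lemma. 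Either cite the separation-of-singularities theorem explicitly or carry out the $\bar\partial$ argument (including the correction of the smooth cutoff decomposition by a solution of a $\bar\partial$-equation with control at infinity); without one of these, the proof has a genuine gap.
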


\begin{proof}
Due to a classical result of N. Aronszajn \cite{aronszajn} (see also \cite{hoermandernComp} or \cite{muellerWengenAronTheorem} for an alternative proof), the Borel transform of $f$ can be written as $\Bo f=\sum_{j=0}^m F_j$ with $F_j\in H_0(\C\setminus K_j)$, $j=0,...,m$. The P\'{o}lya representation suggests to define 
\[
f_j(z):=\frac{1}{2\pi i} \int_{\Gamma_j} F_j(\xi)\, e^{\xi z}\, d\xi
\]
with $\Gamma_j$ being a Cauchy cycle for $K_j$ in $\C$ for $j=0,...,m$. Obviously, this definition is independent of the particular choice of the $\Gamma_j$ and each $f_j$ is a function of exponential type. The functions $F_j$, $j=0,...,m$, admit a power series representation at infinity, say $\sum_{\nu=0}^\infty F_{\nu,j}/z^{(n+1)}$, for $|z|$ sufficiently large. One directly verifies 
\[
F_{\nu,j}=\frac{1}{2\pi i} \int_\Gamma F_j(\xi)\xi^\nu d\xi=f_j^{(\nu)}(0) \textnormal{  for all }\ j=0,...,m,\ \nu=0,1...\,
\]
so that $F_j=\Bo f_j$ on the unbounded component of $K_j$.
\end{proof}

Our next result gives the connection between (frequently) hypercyclic vectors for a bounded operator $T$ and (frequently) hypercyclic entire funtions $f$ with respect to the translation operator $T_1$.

\begin{proposition}\label{banachToShift}
Let $X$ be a complex Banach space, $T$ a bounded operator on $X$ such that $\sigma(T)$ has no isolated points and such that for some $r\in (0,1)$ a branch of the logarithm $\log$ is defined on an open neighbourhood of $\sigma(T)\setminus\{z:|z|<r\}$. Then, if $T$ is (frequently) hypercyclic on $X$, there exists an entire function $f$ of exponential type that is (frequently) hypercyclic for the translation operator $T_1$ with respect to the topology of $H(\C)$ such that $\log(\sigma(T)\setminus\{z:|z|<r\})\in \A(f)$. 
\end{proposition}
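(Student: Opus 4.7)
The plan is to prove the proposition by chaining three quasi-conjugacies, $T \leadsto B \leadsto D \leadsto T_1$, where $B$ is the shift on $H_0(\Omega(\sigma(T)))$, $D$ is differentiation on entire functions of exponential type, and $T_1$ is the unit translation. Throughout, set $K := \sigma(T) \setminus \{z : |z| < r\}$ and $K' := \sigma(T) \cap \{z : |z| \leq r\}$, so that $\sigma(T) = K \cup K'$.

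Concretely, I would build a continuous linear map $\Psi : X \to H(\C)$ with dense range satisfying $\Psi T = T_1 \Psi$, and then invoke the standard fact that quasi-conjugacy transfers (frequent) hypercyclicity to conclude that $f := \Psi(x_0) \in H(\C)$ is (frequently) hypercyclic for $T_1$ whenever $x_0$ is for $T$. The map $\Psi$ is the composition of four natural pieces. First, by Corollary \ref{quasic}, pick a bounded functional $\Lambda$ so that $\Psi_{\Lambda,T} : X \to H_0(\Omega(\sigma(T)))$ is continuous with dense range and satisfies $\Psi_{\Lambda,T} T = B \Psi_{\Lambda,T}$. Second, apply the inverse Borel transform $\beta(F)(z) := \zwpi \int_\Gamma F(\xi) e^{\xi z} d\xi$, where $\Gamma$ is a Cauchy cycle for $\sigma(T)$ in $\Omega(\sigma(T))$; this is a continuous linear bijection of $H_0(\Omega(\sigma(T)))$ onto $\{h : \sigma(T) \in \A(h)\}$, intertwining $B$ with $D$ via $\Bo \circ D = B \circ \Bo$. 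Third, use Lemma \ref{decomp} on the cover $\sigma(T) = K \cup K'$ to decompose each such $h$ as $h = g_1 + g_2$ with $K \in \A(g_1)$ and $K' \in \A(g_2)$; the Aronszajn contour-integral construction underlying the lemma realizes this splitting as a continuous linear projection $\pi_K : h \mapsto g_1$ that commutes with $D$. Fourth, apply $\Phi_{\log}$ to $g_1$: by hypothesis $\log$ is holomorphic on a neighborhood of $K$, Proposition \ref{phicontinuation} guarantees $\log K \in \A(\Phi_{\log} g_1)$, and Proposition \ref{interpolation} gives $\Phi_{\log} D = T_1 \Phi_{\log}$. Setting $\Psi := \Phi_{\log} \circ \pi_K \circ \beta \circ \Psi_{\Lambda,T}$, chaining the four intertwinings yields $\Psi T = T_1 \Psi$.

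Density of $\Psi(X)$ in $H(\C)$ is shown step by step: $\Psi_{\Lambda,T}$ has dense range by Corollary \ref{quasic}; $\beta$ is bijective onto its target; $\pi_K$ is surjective onto $\{g : K \in \A(g)\}$, since any such $g$ admits the trivial decomposition $g + 0$; and the image of $\Phi_{\log}$ contains the linear span of $\{\Phi_{\log}(e_\alpha) = -e_{\log \alpha} : \alpha \in K\}$, which by a Runge-type argument is dense in $\{f : \log K \in \A(f)\}$. The latter space is dense in $H(\C)$ because it contains every $e^{uz} p(z)$ with $u \in \log K$ and $p$ polynomial, and multiplication by $e^{uz}$ is a homeomorphism of $H(\C)$. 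The remaining condition $\log K \in \A(f)$ for $f = \Psi(x_0)$ is exactly the conclusion of Proposition \ref{phicontinuation}.

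The principal technical obstacle is the third step: the splitting from Lemma \ref{decomp} must be realized as a continuous linear operator that commutes with $D$. Continuity and linearity are forced by the explicit contour-integral form of the Aronszajn decomposition used to prove Lemma \ref{decomp}. $D$-equivariance holds because differentiation on entire functions of exponential type corresponds on Borel transforms to the operation $\Bo h \mapsto \xi \Bo h(\xi) - h(0)$, which is compatible with the fixed splitting contours, hence $\pi_K D = D \pi_K$.
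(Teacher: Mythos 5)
Your overall chain $T \leadsto B \leadsto D \leadsto T_1$ is exactly the paper's, and steps one, two and four match the paper's use of Corollary \ref{quasic}, the Borel isomorphism $\Bo_K:\Exp(K)\to H_0(\C\setminus K)$, and $\Phi_{\log}$ with Propositions \ref{interpolation} and \ref{phicontinuation}. The genuine gap is your third step. You assert that the splitting of Lemma \ref{decomp} over the cover $\sigma(T)=K\cup K'$ can be realized as a continuous linear projection $\pi_K$ with $\pi_K D=D\pi_K$, and you lean the whole quasi-conjugacy on this. But the two pieces $K=\sigma(T)\setminus\{z:|z|<r\}$ and $K'=\sigma(T)\cap\{z:|z|\le r\}$ overlap in $\sigma(T)\cap\{z:|z|=r\}$, so the Aronszajn decomposition is not unique: one may shift by any function whose Borel transform extends across both $\Omega(K)$ and $\Omega(K')$. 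Consequently there is no canonical $\pi_K$, and for any particular choice the identity $\pi_K(Dh)=D\pi_K(h)$ only holds up to such an ambiguity; "compatibility with the fixed splitting contours" does not settle this, and nothing in Lemma \ref{decomp} or its proof delivers linearity, continuity, or $D$-equivariance of a selection. The paper avoids this entirely: it decomposes only the single (frequently) hypercyclic vector, $g=g_0+g_1$ with $K_0=\{z:|z|\le r\}\in\A(g_0)$ and $K_1\in\A(g_1)$, and then shows via the P\'olya representation over a cycle inside $\{|z|<1\}$ that $\|D^ng_0\|_{K,m}\to 0$, so the $D$-orbit of $g_1$ is asymptotic to that of $g$ and $g_1$ is still (frequently) hypercyclic. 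No projection operator is needed; the subsequent step is then run as a quasi-conjugacy of a \emph{nonlinear} dynamical system on the $D$-invariant subset $M=\{h\in\Exp(K):K_1\in\A(h)\}$, which is why the paper invokes \cite[Definition 1.5, Proposition 1.19]{linearChaosbookGEPer} for general metric spaces rather than for operators.

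Two smaller points. First, your density argument for $\Phi_{\log}$ is too quick: the linear span of the pure exponentials $\{e_{\log\alpha}:\alpha\in K\}$ does not literally contain $e^{uz}p(z)$ for nonconstant $p$; one needs $\log(K_1)$ to be infinite (which holds precisely because $\sigma(T)$ has no isolated points, a hypothesis you never use) so that such products lie in the closure. The paper makes this precise by citing \cite[Proposition 2.5 (2)]{beiseMuellerDifferential} to get density of the range of $\Phi_{\log}|_M$ in $\Exp(\conv(\log(K_1)))$, and only at the very end passes to the weaker topology of $H(\C)$. Second, you should record why $K=\sigma(T)\setminus\{z:|z|<r\}\ne\emptyset$ (otherwise $\|T^n\|\to 0$ and $T$ is not hypercyclic); without this the whole construction is vacuous.
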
 

In the following proof it is necessary to consider quasi conjugacy for general dynamical systems as it is introduced in \cite[Definition 1.5]{linearChaosbookGEPer}. Let $S_1:M_1 \rightarrow M_1, $ and $S_2:M_2\rightarrow M_2$ be continuous mappings on metric spaces $M_1,M_2$. Then, $S_2$ is said to be quasi-conjugate to $S_1$ if there exists a continuous mapping $R:M_1\rightarrow M_2$ with dense range such that $R\circ S_1=S_2\circ R$. If for some $x\in M_1$, the orbit $\{S_1^nx:n\in\N\}$ is dense in $M_1$, then so is the orbit $\{S_1^ny:n\in\N\}$ of $y=R(x)$ in $M_2$ (cf. \cite[Proposition 1.19]{linearChaosbookGEPer}). It is also obvious that if $\{S_1^nx:n\in\N\}$ is frequently dense in the sense that for every non-empty open set $U$ of $M_1$ the sequence $\{n: S_1^nx\in U\}$ has positive lower density, the analog property holds for $Rx=y$. Thus, if $M_2$ is a topological vector space and $S_2$ is a continuous operator, $y$ is a frequently hypercyclic vector for $S_2$ in the latter situation. 

\begin{proof}

We assume that $T$ is (frequently) hypercyclic. Then necessarily, $\sigma(T)\setminus\{z:|z|<r\}\neq\emptyset$ since otherwise $||T^n||\rightarrow 0$ as $n\rightarrow \infty$. 
Corollary \ref{quasic} implies the existence of a functional $\Lambda$ such that $\Psi_{\Lambda,T} x=G$ is (frequently) hypercyclic for the shift operator $B$ on $H_0(\Omega(\sigma(T))$. The Borel transform $\Bo=\Bo_K: \Exp(K)\rightarrow H_0(\C\setminus K)$ is an isomorphism for every compact convex set $K$ (cf. first chapter of \cite{morimoto}). We choose $K$ as a convex set so that $G$ is holomorphic off $K$. Then $G|_{\Omega(K)}=G$ is still (frequently) hypercyclic on $H_0(\Omega(K))$, and setting $\Bo_K^{-1} G= g$, we have $\sigma(T)\in \A(g)$. Further, one verifies that  $\Bo_{K} D =B \Bo_{K}  $. Thus $B$ is conjugate to $D$ via $\Bo_{K}$ and hence $g$ is (frequently) hypercyclic for $D$ with respect to the topology of $\Exp(K)$. 
By means of Lemma \ref{decomp} we can decompose $g=g_0+g_1$ such that $K_0:=\{z:|z|\leq r\}\in \A(g_0)$ and $K_1=\sigma(T)\setminus\{z:|z|<r\}\in \A(g_1)$. In terms of the P\'{o}lya representation 
\[
D^n g_0(z)=\frac{1}{2\pi i} \int_\Gamma \Bo g_0(\xi)\,\xi^n\, e^{\xi z}\, d\xi,
\]
where $\Gamma$ can be chosen such that $|\Gamma|\subset \{z:|z|<1\}$. An elementary estimation of the above integral yields $||D^n g_0||_{K,m} \rightarrow 0$ for every $m\in\N$ as $n$ tends to infinity. Thus it turns out that $g_1$ is still (frequently) hypercyclic for $D$ on $\Exp(K)$. \\
We define $M$ as the set of all $h\in \Exp(K)$ so that $K_1\in \A(h)$. Considering the conjugacy of $B$ and $D$, one observes that $M$ is invariant under $D$. Hence, $M$ endowed with a metric coming from $\Exp(K)$ and $D$ form a dynamical system in the sense of \cite[Definition 1.1]{linearChaosbookGEPer}. 
By the Cauchy integral formula and the observation that $\Bo_K e_{\alpha}=\xi\mapsto 1/(\xi-\alpha)$, one deduces $\Phi_{\log} e_\alpha = e_{\log(\alpha)}$ for every $\alpha \in K_1$. Further, since $\sigma(T)$ is assumed to have no isolated points, $K_1$ is an infinite set. By means of \cite[Proposition 2.5 (2)]{beiseMuellerDifferential} this implies that $\Phi_{\log}|_M\rightarrow \Exp(\conv(\log(K_1)))$ has dense range. \\
Finally, with the remark preceding this proof and Proposition \ref{interpolation}, $\Phi_{\log} g_1=:f$ is (frequently) hypercyclic on $\Exp(\conv(\log(K_1)))$, and Proposition \ref{phicontinuation} implies $\log(\sigma(T)\setminus\{z:|z|<r\})\in \A(f)$. The (frequent) hypercyclicity with respect to the topology of $H(\C)$ follows since the latter is weaker than the topology on $\Exp(\conv(\log(K_1)))$.
\end{proof}

\begin{theorem}\label{entireNotFrequently}
Let $f$ be an entire function of exponential type and $K\in \A(f)$ such that $K\subset \{z:\Reee(z)\leq 0\}$ and $K\cap \{z:\Reee(z)=0\}$ is a finite set $\{i\alpha_1,...,i\alpha_m\}$ such that $\alpha_1,...,\alpha_m$ are linearly independent over the field of rationals. Then $f$ is not frequently hypercyclic for the translation operator $T_1$ in the topology of $H(\C)$.
\end{theorem}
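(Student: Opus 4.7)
Assume for contradiction that $f$ is frequently hypercyclic for $T_1$ in the topology of $H(\C)$. The strategy is to isolate, via a decomposition of $f$, the contribution of each point $i\alpha_j$, and then to exploit the rational independence of the $\alpha_j$ through Kronecker's theorem in order to reach a contradiction with the requirement that the orbit $\{T_1^n f\}$ visits every open subset of $H(\C)$ with positive lower density.

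First I would fix $\rho > 0$ small enough that the closed disks $\overline{B(i\alpha_j,\rho)}$, $j = 1,\dots,m$, are pairwise disjoint. Setting $K_j := K \cap \overline{B(i\alpha_j,\rho)}$ for $j = 1,\dots,m$ and $K_0 := K \setminus \bigcup_j B(i\alpha_j,\rho/2)$, the hypothesis that the $i\alpha_j$ are the only points of $K$ on the imaginary axis yields $K_0 \subset \{z : \Reee z \leq -\delta\}$ for some $\delta = \delta(\rho) > 0$. Lemma~\ref{decomp} then provides a decomposition $f = f_0 + f_1 + \cdots + f_m$ with $K_j \in \A(f_j)$, and putting $g_j := e_{-i\alpha_j} f_j$, Corollary~\ref{shiftalpha} gives $K(g_j) \subset K_j - i\alpha_j \subset \overline{B(0,\rho)}$, so each $g_j$ lies in $\Exp(\overline{B(0,\rho)})$. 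Support-function estimates applied to $K(f_0) \subset \conv(K_0)$ yield $|f_0(t+z)| = O(e^{-\delta t/2})$ as $t \to \infty$, uniformly for $z$ in any fixed compact set, whence for $|z| \leq R$ and large $n$,
\[
f(n+z) \;=\; \sum_{j=1}^m e^{i\alpha_j(n+z)}\,g_j(n+z) \;+\; O(e^{-\delta n/2}).
\]

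By Kronecker's theorem the one-parameter orbit $\{(e^{i\alpha_1 t},\dots,e^{i\alpha_m t}) : t \in \R\}$ is dense in $\T^m$. Frequent hypercyclicity of $f$ provides, for each target $h \in H(\C)$ and each $\varepsilon, R > 0$, a positive-lower-density set $N_{h,\varepsilon,R} := \{n \in \N : \sup_{|z|\leq R}|f(n+z)-h(z)|<\varepsilon\}$. The plan is to combine these two sources: picking $n \in N_{h,\varepsilon,R}$ along which, via a Kronecker-selection argument, the phases $(e^{i\alpha_j n})$ are controlled to approximate a prescribed $\theta \in \T^m$, one forces, through the asymptotic expansion above, the local behaviour of each $g_j(n+\cdot)$ on $|z| \leq R$ to approximate a specific entire function determined by $\theta$ and by the local behaviour of $h$. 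Choosing two targets $h_1, h_2$ whose induced $g_j$-data at a common $\theta$ are incompatible then should produce the desired contradiction.

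The main obstacle is the quantitative control of the error
\[
E_n(z) \;:=\; \sum_{j=1}^m e^{i\alpha_j(n+z)}\,\bigl(g_j(n+z) - g_j(n)\bigr).
\]
Cauchy's estimates applied to $g_j$ on disks of optimal radius $k/\rho$ yield $|g_j^{(k)}(n)| \lesssim \sqrt{k}\,\rho^k e^{\rho n}$, and consequently $|E_n(z)| \lesssim \rho R\, e^{\rho(n+R)}$ on $|z| \leq R$. The $\rho$-dependence is crucial: refining the decomposition (shrinking $\rho$) makes $g_j(n+\cdot)$ behave essentially as the constant $g_j(n)$ on unit disks, so that $f(\cdot + n)$ becomes approximately an element of the $m$-dimensional subspace $V := \linspan\{e^{i\alpha_j z} : j = 1,\dots,m\}$ of $H(\C)$, up to an error controlled by $\rho$. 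Since $V$ is a closed proper subspace of the infinite-dimensional $H(\C)$, this $m$-parameter constraint is what ultimately should rule out the orbit being dense with positive lower density; making this argument rigorous---in particular bridging the continuous-parameter Kronecker density with the discrete-parameter requirement on integer shifts, and ensuring the contradiction survives $\rho > 0$ in the Aronszajn-type decomposition---is the principal technical challenge.
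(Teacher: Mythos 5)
Your overall strategy (decompose $f$ via Lemma~\ref{decomp} into pieces concentrated near the $i\alpha_j$ plus a piece $f_0$ that decays on the positive real axis, factor out $e_{i\alpha_j}$ via Corollary~\ref{shiftalpha}, and play Kronecker off against frequent hypercyclicity) matches the paper's opening moves, but the argument breaks down exactly at the point you flag as the ``principal technical challenge,'' and that obstacle is not a technicality --- it is fatal to this route. Your error bound $|E_n(z)|\lesssim \rho R\,e^{\rho(n+R)}$ diverges as $n\to\infty$ for every fixed $\rho>0$, while the functions $g_j(n)$ you want to compare it with need not grow at all; so $f(n+\cdot)$ is \emph{not} approximately in the span $V$ of the $e^{i\alpha_j z}$, no matter how small you take $\rho$ in the Aronszajn decomposition. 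Moreover, your proposed endgame (``the orbit stays near a proper closed subspace, contradicting density'') would, if it worked, rule out ordinary hypercyclicity as well, and it never uses the positive lower density quantitatively --- a sign that the essential mechanism is missing. A further unaddressed point: Kronecker's theorem gives density of $(e^{i\alpha_1 t},\dots,e^{i\alpha_m t})$ over \emph{real} $t$, but you need to prescribe the phases at the particular integers $n$ of the return set, which is a genuinely different (Weyl-type) statement that does not follow from rational independence alone.

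The paper's proof avoids all three problems by working only on the real line and by calibrating the decomposition to the density. It first targets the constant function $1$ on a disk large enough that $|f-1|<1/2$ on whole intervals $[n,n+t_0]$ for a set of $n$ of lower density $d$, where $t_0$ comes from a \emph{uniform} version of Kronecker's theorem (Lemma~\ref{kroneckerLemma}); this lets one choose a real $t\in[n,n+t_0]$ (not $n$ itself) realizing any prescribed phases. It then takes the disks in the decomposition of radius $d/2t_0m$ --- tied to the density and to $t_0$ --- and shows by a quadrant/sign argument that for each such $n$ some $\Reee(h_j)$ or $\Imm(h_j)$ must vanish in $[n,n+t_0]$: otherwise a Kronecker-chosen rotation pushes every $e^{i\alpha_j t}h_j(t)$ into the left half-plane, contradicting $\Reee f(t)>1/2$. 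The contradiction is then reached not by an approximation argument but by a uniqueness theorem: the product $P=\prod_j R_jI_j$ has exponential type less than $d/t_0$ yet real zeros of density at least $d/t_0$, forcing $P\equiv 0$ and, inductively, $h_j\equiv 0$ for all $j$. If you want to salvage your approach you would need to replace the finite-dimensionality heuristic by some such zero-counting or completeness argument in which the radius $\rho$ is chosen in terms of the density of the return set.
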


\begin{lemma}\label{kroneckerLemma}
Let $\alpha_1,...,\alpha_m \in\R$ be linearly independent over the field of rationals and $\varepsilon>0$. Then there exists a $t_0:=t_{0,\varepsilon} >0$ such that for every choice of $\beta_1,...,\beta_m\in [0,2\pi]$ and $x\in\R$, there exists a $t\in [x,x+t_0]$ that satisfies $\sup_{j=1,...,m}|e^{ i\alpha_j t}-e^{i\beta_j}|<\varepsilon$.
\end{lemma}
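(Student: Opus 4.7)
The plan is to prove the lemma in two stages. Stage one reduces the problem to the special case $x=0$ by exploiting the one-parameter subgroup structure $t\mapsto \gamma(t):=(e^{i\alpha_1 t},\dots,e^{i\alpha_m t})$ on the torus $\T^m$. Stage two proves the $x=0$ case by combining Kronecker's theorem (as stated above) with a compactness argument on $\T^m$.

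For the reduction, I would write $t=x+s$ and use the factorisation $e^{i\alpha_j(x+s)}=e^{i\alpha_j x}\cdot e^{i\alpha_j s}$. Finding $t\in[x,x+t_0]$ with $\sup_j|e^{i\alpha_j t}-e^{i\beta_j}|<\varepsilon$ is then equivalent to finding $s\in[0,t_0]$ with $\sup_j|e^{i\alpha_j s}-e^{i(\beta_j-\alpha_j x)}|<\varepsilon$. Since the new target phases, reduced mod $2\pi$, again lie in $[0,2\pi]$, it suffices to produce a single $t_0$ that works uniformly over all $\beta\in[0,2\pi]^m$ in the case $x=0$.

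For the uniform $x=0$ statement, fix $\beta\in[0,2\pi]^m$. Kronecker's theorem furnishes a real number making $\sup_j|e^{i\alpha_j s}-e^{i\beta_j}|<\varepsilon/2$; I would arrange $s>0$ by replacing it with $s+s'$, where Kronecker is applied once more to make $(e^{i\alpha_j s'})_j$ close enough to $(1,\dots,1)$ (and $s'$ large enough to dominate any negative $s$). Call the resulting positive number $s_\beta$. By continuity of $\gamma$ and of the target map $\beta\mapsto(e^{i\beta_1},\dots,e^{i\beta_m})$, there is an open neighbourhood $V_\beta$ of $\beta$ in $\T^m$ on which $s_\beta$ still yields an error strictly less than $\varepsilon$. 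Compactness of $\T^m$ lets me extract a finite subcover $V_{\beta^{(1)}},\dots,V_{\beta^{(N)}}$, and then $t_0:=\max_{k\leq N} s_{\beta^{(k)}}$ has the required property.

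The only real subtlety is ensuring positivity of the Kronecker approximants $s_\beta$ (so that they lie in $[0,t_0]$), which is handled cleanly via the group law on $\T^m$ as described; the rest is a standard compactness-plus-continuity argument. No genuine obstacle remains once the reduction via the group structure has been noted.
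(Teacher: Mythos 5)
Your overall architecture --- reduce to $x=0$ via the group law $e^{i\alpha_j(x+s)}=e^{i\alpha_j x}e^{i\alpha_j s}$, then cover $[0,2\pi]^m$ by finitely many neighbourhoods on which a single Kronecker approximant works, and take $t_0$ to be a maximum over the finite subcover --- is exactly the paper's argument. The one place where you genuinely diverge is the treatment of the sign of the Kronecker approximants, and that is where your proof has a gap. You need each $s_\beta$ to be \emph{positive} (so that it lies in $[0,t_0]$), and you propose to achieve this by ``applying Kronecker once more'' to produce an $s'$ with $(e^{i\alpha_j s'})_j$ close to $(1,\dots,1)$ \emph{and} $s'$ large enough to dominate any negative $s$. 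But Kronecker's theorem, as stated, only asserts the existence of \emph{some} real $t$ with the required approximation property; it gives no control whatsoever over the size or sign of $t$. For the target $(1,\dots,1)$ the conclusion is satisfied trivially by $t=0$, so the theorem by itself produces nothing you can use. The fact you actually need --- that for every $\delta>0$ there exist arbitrarily large positive $s'$ with $\sup_j|e^{i\alpha_j s'}-1|<\delta$ --- is true, but it is a recurrence statement about the flow $t\mapsto(e^{i\alpha_1 t},\dots,e^{i\alpha_m t})$ that requires its own argument (e.g.\ a pigeonhole argument on the compact torus, or the density of the forward orbit), and it is of essentially the same nature as the syndeticity statement the lemma itself is asserting. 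As written, this step is unjustified.

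The paper avoids the issue entirely: it allows the approximants $t_{b_1},\dots,t_{b_l}$ to have arbitrary sign, sets $t_0:=2\max\{|t_{b_1}|,\dots,|t_{b_l}|\}$ so that all of them lie in $[-t_0/2,t_0/2]$, and then for general $x$ applies the covering statement to the shifted targets $\beta_j-\alpha_j(x+t_0/2)$, obtaining $t\in[-t_0/2,t_0/2]$ and hence $\tilde t:=x+t_0/2+t\in[x,x+t_0]$. If you replace your positivity manoeuvre by this symmetric-interval trick (or, alternatively, supply the recurrence argument for the existence of large positive almost-periods), your proof is complete; the remaining bookkeeping with $\varepsilon/2$ versus $\varepsilon/4$ in the triangle inequality for $\gamma(s)\gamma(s')$ is routine.
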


\begin{proof}
For a given $b=(\beta_1,...,\beta_m)\in [0,2\pi]^m$, Kronecker's theorem ensures the existence of a $t_b\in \R$ such that $\sup_{j=1,...,m}|e^{ i\alpha_j t_b}-e^{i\beta_j}|<\varepsilon$. By continuity, it follows that there is a $\delta_b>0$ so that still $\sup_{j=1,...,m}|e^{ i\alpha_j t_b}-e^{i\rho_j}|<\varepsilon$ for every $(\rho_1,...,\rho_m)$ in $U_{\delta_b}(b)=\{r\in [0,2\pi]^m:||r-b||_2<\delta_b\}$. Due to the compactness of $[0,2\pi]^m$, there are finitely many $b_1,...,b_l\in [0,2\pi]^m$ so that $[0,2\pi]^m\subset \bigcup_{\nu=1}^l U_{\delta_{b_\nu}}$ and thus $t_0:=2\max\{|t_{b_1}|,...,|t_{b_l}|\}$ has the claimed property for $x=-t_0/2$. For general $x\in \R$ the above yields a $t\in [-t_0/2, t_0/2]$ such that $\sup_{j=1,...,m}|e^{ i\alpha_j t}-e^{i\beta_j-i\alpha_j(x+t_0/2)}|<\varepsilon$ so that $\tilde{t}:=x+t_0/2+t\in[x,x+t_0]$ has the claimed property.

\end{proof}

\begin{proof}[Proof of Theorem \ref{entireNotFrequently}]
By means of Lemma \ref{kroneckerLemma}, there is a $t_0>0$ such that for any $\beta_1,...,\beta_m\in [0,2\pi]$ and $x\in [0,\infty)$, there exists a $t\in [x,x+t_0]$ that satisfies 
\begin{equation}\label{kronEqua}
\sup_{j=1,...,m}|e^{ i\alpha_j t}-e^{i\beta_j}|<\frac{1}{2}.
\end{equation}
For the purpose of showing the result by contradiction, we assume that $f$ is frequently hypercyclic. This implies the existence of an increasing sequence $N\in \N^{\N}$ so that $\ldens(N)=d>0$ and with 
\begin{equation}\label{freqCond}
\sup_{x\in [n,n+t_0]} |f(x)-1|<\frac{1}{2} \mbox{ for every }n\in N.
\end{equation}
We thin out the sequence $N$ to a sequence $N_1$ in the following manner: Set $n_1:=\min\{n:n\in N\}$ and $n_j:=\min\{n\in N :n_{j-1}+t_0<n\}$ for $j>1$. One immediately deduces that  $\ldens(N_1)\geq d/t_0$. 
Considering the geometric assumption of $K$ and applying Lemma \ref{decomp}, we may decompose $f$ into a finite sum of functions of exponential type $\sum_{j=0}^m f_j=f$ with $K_j\in \A(f_j)$, $j=0,...,m$ so that 
\begin{equation}\label{decom_f}
K_j\subset \{z:|z-i\alpha_j|<d/2t_0m\} \mbox{ for } j=1,...,m
\end{equation}
and $K_0\subset \{z:\Reee(z)<0\}$. The latter condition implies that $h_{f_0}(0)<0$, which can be verified by $(\ref{suppIndic})$, and that gives $|f_0(x)|\rightarrow 0$ for $x\in\R$ as $x\rightarrow +\infty$. Hence, for sufficiently large integers in $N_1$, (\ref{freqCond}) still holds for $\sum_{j=1}^m f_j$, i.e.    
\begin{equation}\label{freqCondTwo}
\sup_{x\in [n,n+t_0]} |\sum_{j=1}^m f_j(x)-1|<\frac{1}{2} \mbox{ for every }n\in N_2,
\end{equation}
where $N_2:=\{n\in N_1:n>n_0\in \N \}$ for some $n_0 \in\N$. It is clear that $\ldens(N_2)\geq d/t_0$. Due to Corollary \ref{shiftalpha}, the functions $h_j =e_{-i\alpha_j}f_j$, $j=1,...,m$ are entire functions of exponential type with $L_j=K_j-i\alpha_j\in \A(h_j)$ and by (\ref{decom_f}), $L_j\subset \{z:|z|<d/2t_0m\}$. The last inclusion implies that each $h_j$ is a function of exponential type less than $d/2t_0m$.\\
We show that at least one of $\Reee( h_j)$ or $\Imm(h_j)$, $j=1,...,m$ has a zero on $[n,n+t_0]$ for every $n\in N_2$. For this, let us fix an $n\in N_2$ and suppose that the following holds:
\begin{enumerate}\label{assumpt}
\item[(A)]For all $j=1,...,m$, we have $\Reee(h_j(n))\neq 0$, $\Imm(h_j(n))\neq 0$ and  neither $\Reee(h_j)$ nor $\Imm(h_j)$ have a change of sign on $[n,n+t_0]$.
\end{enumerate}
We choose $b=(\beta_1,....,\beta_m)\in [0,2\pi]^m$ in the following way
\begin{align}\label{betachoice}
&\textnormal{if } \Reee( h_j)(n)> 0 \textnormal{ and }  \Imm(h_j)(n)> 0 \textnormal{, then } \beta_j=3/4\,\pi \notag\\
&\textnormal{if } \Reee( h_j)(n)>0 \textnormal{ and }  \Imm(h_j)(n)< 0 \textnormal{, then } \beta_j=-3/4\,\pi \\
&\textnormal{if } \Reee( h_j)(n)< 0 \textnormal{ and }  \Imm(h_j)(n)> 0 \textnormal{, then } \beta_j=1/4\,\pi \notag\\
&\textnormal{if } \Reee( h_j)(n)< 0 \textnormal{ and }  \Imm(h_j)(n)< 0 \textnormal{, then } \beta_j=-1/4\,\pi \notag.
\end{align}
Then, considering the arcs
\begin{align*}
&S_1:=\{e^{i \lambda}\in \T: 0< \lambda <  \pi/2\},  \\ 
&S_2:=\{e^{i \lambda}\in \T: \,  \pi/2 < \lambda <\pi\}, \\ 
&S_3:=\{e^{i \lambda}\in \T: \, -1/2\,\pi <\lambda < 0\}, \\
&S_4:=\{e^{i \lambda}\in \T: -\pi < \lambda<  -1/2\, \pi\},
\end{align*}
one verifies that under assumption (A), $h_j(t)/|h_j(t)|$ stays in $S_1,S_2,S_3,S_4$ for all $t\in [n,n+t_0]$ if $h_j(n)/|h_j(n)|$ is already contained in the respective arc. Using the choice of $b$ according to (\ref{betachoice}), this implies that for all $\theta\in \R$ so that $|e^{i\theta}-e^{i\beta_j}|<1/2$, the rotation by $\theta$ is such that $e^{i\theta} h_j(t)$ is contained in the left half plane.  \\
Considering (\ref{kronEqua}), we can conclude that there exists a $t\in [n,n+t_0]$ such that $e^{i\alpha_j t} h_j(t)\in\{z:\Reee(z)<0\}$ for $j=1,...,m$. However, this contradicts (\ref{freqCondTwo}), so that assumption (A) must be false.
By the intermediate value theorem, we obtain our assertion that one of $\Reee( h_j)$ or $\Imm(h_j)$, $j=1,...,m$, has a zero on $[n,n+t_0]$.  \\
Setting $R_j(z):=\sum_{\nu=0}^\infty \Reee(h_j^{(\nu)}(0))/\nu! \,z^\nu $ and $I_j(z):=\sum_{\nu=0}^\infty \Imm(h_j^{(\nu)}(0))/\nu! \,z^\nu $ we obtain functions of exponential type less than $d/2t_0m$. This is clear since $\tau(h_j)\leq d/2 t_0 m$ and $|h_j^{(\nu)}(0)|\geq |\Reee(h_j^{(\nu)}(0))|$, $|h_j^{(\nu)}(0)|\geq |\Imm(h_j^{(\nu)}(0))|$. Further, taking into account that $R_j(x)=\Reee(h_j(x))$ and $I_j(x)=\Imm(h_j(x))$ for all real $x$, we obtain that the product $P:=\prod_{j=1}^m R_j\,I_j$ has a zero $[n,n+t_0]$ for every $n\in N_2$. As a product of $2m$ functions of exponential type less than $d/2t_0 m$, $P$ is a function of exponential type less than $d/t_0$ (cf. \cite{boas}). Since $\ldens(N_2)\geq d/t_0$, this can only happen if $P\equiv 0$ (cf. \cite[Theorem 2.5.13]{boas}). Consequently, $R_j$ or $I_j$ is constantly zero for at least one $j\in\{1,...,m\}$.\\
Without restriction, say $R_1\equiv 0$ or $I_1\equiv 0$. 
We then proceed in the same way as above without considering $\Reee(h_1)$ or $\Imm(h_1)$, respectively. In case that $R_1\equiv 0$ there is no need to impose a condition on $\beta_1$ in (\ref{betachoice}) since $h_1(t)\in i\R$ for every $t\in \R$ which is sufficient to obtain the impossibility of (\ref{freqCondTwo}). If $I_1\equiv 0$, we start at the beginning and consider $if$ instead of $f$.\\
Inductively, we finally obtain that $h_j\equiv 0$ for all $j=1,...,m$, and have a contradiction to our assumption that $f$ is frequently hypercyclic. 
\end{proof}

\begin{proof}[Proof of Theorem \ref{thendl}]We suppose that $T$ is frequently hypercyclic.
Then the spectrum of $T$ cannot have isolated points by \cite[Theorem 1.2]{shkarinSpectrum}. 
As in the proof of \cite[Theorem 1.2]{shkarinSpectrum} we may assume that $K=\sigma(T)$.
By the condition that the intersection of $K$ and $\{z:|z|=1\}$ is a finite set, there is an $0<r<1$ such that a branch of the logarithm $\log$ is defined on $\sigma(T)\setminus\{z:|z|<r\}$.
Thus Proposition \ref{banachToShift} yields the existence of a frequently hypercyclic $f$ with respect to the translation operator $T_1$ (in the topology of $H(\C)$) that is of exponential type and such that $ L=\log (\sigma(T)\setminus\{z:|z|<r\})\in A(f)$. But then $\{i\alpha_1,...,i\alpha_m\}=L\cap\{z:\Reee(z)=0\}$ so that we have a contradiction to Theorem \ref{entireNotFrequently}.
\end{proof}

\begin{proof}[Proof of Corollary \ref{fourNecCond}]
As in the proof of \cite[Theorem 1.2]{shkarinSpectrum} we may assume that $K=\sigma(T)$. 
In order to see (1), consider the operator $S=e^{i r}T^n$. Then $\sigma(S)\subset \{z:|z|\leq 1\}$ and $\sigma(S)\cap\{z:|z|=1\}=\{e^{ir+i\alpha_1 n},...,e^{ir+i\alpha_m n}\}$. Now, if $T$ is frequently hypercyclic, the application of \cite[Theorem 9.27, Theorem 9.35]{linearChaosbookGEPer} yields that $S$ is frequently hypercyclic. However, this contradicts Theorem \ref{thendl} since $r+\alpha_1 n,...,r+\alpha_m n$ are assumed to be linearly independent over the field of rationals.

For the proof of (2) we define $C$ as the set of all $c=(c_1,...,c_m)\in \Z^m$ so that $\sum_{j=1}^m c_j \neq 0$. For a given $r\in\R$ the assumption of (2) implies that $\alpha_1+r,...,\alpha_m+r$ are not linearly independent over the field of rationals if and only if $\sum_{j=1}^m c_j(\alpha_j+r) =0$ for some $c=(c_1,...,c_m) \in C$. The latter is equivalent to 
\[
r=-\frac{1 }{\sum\limits_{j=1}^m c_j} \,\sum\limits_{j=1}^m c_j\alpha_j \textnormal{  for some } c\in C.
\]
Since $C$ is countable, there exists an $r_0$ such that the above equality does not hold. Considering (1), $T$ cannot be frequently hypercyclic. 

In the situation of (3) one verifies that the condition of (2) holds in case that $A$ contains two points. If $A$ is a singleton, it is immediately clear that $A+r$ is linearly independent over the fields of rational for some $r\in\R$. Thus, in both cases the assertion follows from (1) and (2).

Under the conditions of (4), there is a positive integer $k$ such that $\sigma(T^k)\cap \{z:|z|=1\}=\{1\}$. With (3) we can conclude that $T^k$ is not frequently hypercyclic and considering \cite[Theorem 9.27]{linearChaosbookGEPer}, this shows that also $T$ cannot be frequently hypercyclic.
\end{proof}

In view of Theorem \ref{thendl} and Corollary \ref{fourNecCond} it is natural to ask the following
\begin{question}
Let $T$ be a frequently hypercyclic bounded operator on a Banach space such that $\sigma(T)\subset\{z:|z|\leq 1\}$. 
\begin{enumerate}
\item[(a)] Is it possible that $|\sigma(T)\cap \T|=3$?
\item[(b)] More general, is it possible that $|\sigma(T)\cap \T|<\infty$?
\end{enumerate}
\end{question}

\subsection*{Acknowledgements}
I thank Karl-Goswin Grosse-Erdmann and Jürgen Müller for very interesting and helpful discussions.

\bibliographystyle{plain}                           
\bibliography{referencesList}

\end{document}